\newcommand{\const}{\operatorname{const.}}
\newcommand{\dvol}{\operatorname{dvol}}
\newcommand{\Hess}{\operatorname{Hess}}
\newcommand{\Image}{\operatorname{Im}}
\newcommand{\Ker}{\operatorname{Ker}}
\newcommand{\R}{{\mathbb R}}
\newcommand{\Z}{{\mathbb Z}}
\numberwithin{equation}{section}
\theoremstyle{plain}
\newtheorem{lemma}[equation]{Lemma}
\newtheorem{theorem}[equation]{Theorem}
\newtheorem{proposition}[equation]{Proposition}
\theoremstyle{definition}
\newtheorem{definition}[equation]{Definition}
\theoremstyle{definition}
\theoremstyle{definition}
\newtheorem{remark}[equation]{Remark}
\def\<{\langle}
\def\>{\rangle}
\def\({\left(}
\def\){\right)}
\begin{document}

\title{An intrinsic parallel transport in Wasserstein space}
\author{John Lott}
\address{Department of Mathematics\\
University of California - Berkeley\\
Berkeley, CA  94720-3840\\ 
USA} \email{lott@berkeley.edu}

\thanks{Research partially supported
by NSF grant DMS-1207654 and a Simons Fellowship}
\date{January 6, 2017}
\subjclass[2000]{}

\begin{abstract}
If $M$ is a smooth compact connected Riemannian manifold, let $P(M)$ 
denote the Wasserstein
space of probability measures on $M$.
We describe a geometric
construction of parallel transport of some tangent cones along 
geodesics in $P(M)$. 
We show that when everything is smooth, the geometric 
parallel transport agrees with earlier formal calculations.
\end{abstract}

\maketitle

\section{Introduction} \label{section1}

Let $M$ be a smooth compact connected Riemannian manifold
without boundary.
The space $P(M)$ of probability measures of $M$ carries a natural metric,
the Wasserstein metric, and acquires the structure of a length space.
There is a close
relation between minimizing geodesics in $P(M)$ and optimal
transport between measures. For more information on this relation,
we refer
to Villani's book \cite{Villani (2009)}.

Otto discovered a formal Riemannian structure on $P(M)$, underlying
the Wasserstein metric
\cite{Otto (2001)}. One can do formal geometric calculations for this
Riemannian structure \cite{Lott (2008)}.
It is an interesting problem to make these formal considerations
into rigorous results in metric geometry.

If $M$ has
nonnegative sectional curvature
then $P(M)$ is a compact length space with nonnegative curvature
in the sense of Alexandrov
\cite[Theorem A.8]{Lott-Villani (2009)},
\cite[Proposition 2.10]{Sturm (2006)}.
Hence one can define the tangent cone $T_{\mu} P(M)$
of $P(M)$ at a measure $\mu \in P(M)$.
If $\mu$ is absolutely continuous with respect to the volume form
$\dvol_M$ then $T_{\mu} P(M)$ is a Hilbert space
\cite[Proposition A.33]{Lott-Villani (2009)}. More generally, 
one can define tangent cones of $P(M)$ without any curvature assumption
on $M$,
using Ohta's $2$-uniform structure on $P(M)$ \cite{Ohta (2009)}.
Gigli showed that
$T_{\mu} P(M)$ is a Hilbert space if and only if $\mu$ is a ``regular''
measure, meaning that it gives zero measure to any hypersurface which,
locally, is the graph of the difference of two convex functions
\cite[Corollary 6.6]{Gigli (2011)}. 
For examples of tangent cones at nonregular measures,
if $S$ is an embedded submanifold of $M$, and $\mu$ is an
absolutely continuous measure on $S$, then $T_{\mu} P(M)$ was computed
in \cite[Theorem 1.1]{Lott (2016)}.

If $\gamma \: : \: [0,1] \rightarrow M$ is a smooth curve in a
Riemannian manifold then one can define the (reverse) parallel transport
along $\gamma$ as a linear isometry from $T_{\gamma(1)}M$ to $T_{\gamma(0)}M$.
If $X$ is a finite-dimensional Alexandrov space then the replacement
of a tangent space is a tangent cone. If one wants to define a
parallel transport along a curve $c : [0,1] \rightarrow X$, as a
map from $T_{c(1)} X$ to $T_{c(0)} X$, then there is the problem that
the tangent cones along $c$ may not look much alike.  
For example, the curve $c$ may pass through various strata of $X$.
One can deal with
this problem by assuming that $c$ is in the interior of a minimizing
geodesic.  In this case, Petrunin proved the tangent cones along
$c$ are mutually isometric, by constructing a parallel transport
map \cite{Petrunin (1998)}. His construction of the
parallel transport map was based on passing to a subsequential limit in an
iterative construction along $c$.  It is not known whether the
ensuing parallel transport is uniquely defined, although this is
irrelevant for Petrunin's result.

In the case of a smooth curve
$c \: : \: [0,1] \rightarrow P^\infty(M)$ in the space of
smooth probability measures, one can do formal
Riemannian geometry calculations on $P^\infty(M)$ to write down an equation
for parallel transport along $c$ \cite[Proposition 3]{Lott (2008)}. It is a
partial differential equation in terms of a family of functions
$\{\eta_t\}_{t \in [0,1]}$. Ambrosio and Gigli noted that there
is a weak version of this partial differential equation
\cite[(5.9)]{Ambrosio-Gigli (2008)}. By a slight extension,
we will define weak solutions to the formal parallel transport
equation; see Definition \ref{definition4.13}.

Petrunin's construction of parallel transport cannot work in full
generality on $P(M)$, since Juillet showed that there is
a minimizing 
Wasserstein geodesic $c$ with the property that the tangent cones
at measures on the interior of $c$ are not all mutually isometric
\cite{Juillet (2011)}.
However one can consider applying the construction on certain 
convex subsets of $P(M)$. We illustrate this in two cases.
The first and easier case is when $c$ is a Wasserstein geodesic
of $\delta$-measures (Proposition \ref{deltaprop}). 
The second case is when $c$ is a Wasserstein
geodesic of absolutely continuous measures, lying in the interior of a
minimizing Wasserstein geodesic, and satisfying a
regularity condition. Suppose that
$\nabla \eta_1 \in T_{c(1)} P(M)$ is an element of the tangent cone at
the endpoint. Here $\nabla \eta_1 \in L^2(TM, dc(1))$ is a square-integrable
gradient vector field on $M$ and $\eta_1$ is in the Sobolev space 
$H^1(M, dc(1))$. For each sufficiently large
integer $Q$, we
construct a triple 
\begin{equation}
(\nabla \eta_Q, \nabla \eta_Q(0), \nabla \eta_Q(1)) \in
L^2([0,1]; L^2(TM, dc(t))) \oplus L^2(TM, dc(0)) \oplus L^2(TM, dc(1))
\end{equation}
with $\nabla \eta_Q(1) = \nabla \eta_1$, 
which represents an approximate parallel
transport along $c$.

\begin{theorem} \label{theorem1.3}
Suppose that $M$ has nonnegative sectional curvature.
A subsequence of 
$\{ (\nabla \eta_Q, \nabla \eta_Q(0), \nabla \eta_Q(1))\}_{Q=1}^\infty$
converges weakly to a weak 
solution $(\nabla \eta_\infty, \nabla \eta_{\infty,0}, 
\nabla \eta_{\infty, 1})$ of the parallel transport equation with
$\nabla \eta_{\infty, 1} = \nabla \eta_1$. 
If $c$ is a smooth geodesic in $P^\infty(M)$,
$\eta_1$ is smooth, and there is a
smooth solution $\eta$ to the parallel transport equation
(\ref{4.6}) with
$\eta(1) = \eta_1$, then
$\lim_{Q \rightarrow \infty}  (\nabla \eta_Q, \nabla \eta_Q(0), \nabla
\eta_Q(1)) =
(\nabla \eta, \nabla \eta(0), \nabla \eta(1))$ in norm.
\end{theorem}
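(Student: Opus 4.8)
The plan is to prove the two assertions in turn: first extract a weakly convergent subsequence and identify its limit as a weak solution, and then, under the smoothness hypotheses, upgrade weak convergence to norm convergence.

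For the first assertion I would start from a uniform bound on the triples. The third component is the fixed vector $\nabla\eta_1$, so only the first two must be controlled, and this is where nonnegative sectional curvature enters, making $P(M)$ a nonnegatively curved Alexandrov space. The Alexandrov comparison underlying the Petrunin-type construction renders the transition map between consecutive tangent cones along $c$ nonexpanding, so that composing these maps gives the pointwise estimate $\|\nabla\eta_Q(t)\|_{L^2(TM,dc(t))}\le\|\nabla\eta_1\|_{L^2(TM,dc(1))}$ for all $t\in[0,1]$ and all $Q$. Integrating in $t$ bounds $\|\nabla\eta_Q\|_{L^2([0,1];L^2(TM,dc(t)))}$, and the same comparison at $t=0$ bounds $\|\nabla\eta_Q(0)\|$. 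The triples thus lie in a fixed bounded subset of the Hilbert space $L^2([0,1];L^2(TM,dc(t)))\oplus L^2(TM,dc(0))\oplus L^2(TM,dc(1))$, so by weak sequential compactness a subsequence converges weakly to some $(\nabla\eta_\infty,\nabla\eta_{\infty,0},\nabla\eta_{\infty,1})$, and the constancy of the third component gives $\nabla\eta_{\infty,1}=\nabla\eta_1$.

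The next step is to verify that this weak limit is a weak solution in the sense of Definition \ref{definition4.13}. The decisive structural point is that the parallel transport equation (\ref{4.6}) is \emph{linear} in the transported field $\nabla\eta$; the geodesic $c$ and its velocity potential enter only as fixed coefficients, whose regularity is supplied by the hypotheses on $c$. Hence, after pairing the weak form against a smooth test object and integrating the derivatives off $\nabla\eta$ onto the test object and the coefficients, every term is a bounded linear functional of the triple and so passes to the weak limit. It therefore remains to show that each $\nabla\eta_Q$ satisfies this weak equation up to a defect that tends weakly to zero. These defects are precisely the discretization errors of the Petrunin-type construction at mesh $1/Q$, and estimating them by the Alexandrov comparison and letting $1/Q\to0$ is the \emph{main obstacle} of the proof; it is here that nonnegative curvature is used most essentially. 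Once the defects are shown to vanish weakly, the limiting triple solves the weak parallel transport equation, which proves the first assertion.

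For the second assertion, assume the data are smooth and that a smooth solution $\eta$ of (\ref{4.6}) exists with $\eta(1)=\eta_1$. Such an $\eta$ is in particular a weak solution, and the exact smooth transport is a linear isometry, so $\|\nabla\eta(t)\|_{L^2(TM,dc(t))}$ is independent of $t$ and equal to $\|\nabla\eta_1\|$. Because the weak equation is linear and one of the two solutions is smooth, a weak-strong uniqueness argument — pairing the weak equation for $\nabla\eta_\infty$ against the smooth solution and using the norm-preserving (skew-adjoint) structure of the transport — forces $\nabla\eta_\infty=\nabla\eta$, and the consistency of the construction's endpoint data identifies the remaining components as $\nabla\eta(0)$ and $\nabla\eta(1)$. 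Finally I would promote weak to norm convergence using the Hilbert-space fact that weak convergence together with convergence of norms implies norm convergence. Lower semicontinuity of the norm gives $\|(\nabla\eta,\nabla\eta(0),\nabla\eta(1))\|\le\liminf_Q\|(\nabla\eta_Q,\nabla\eta_Q(0),\nabla\eta_Q(1))\|$, while the pointwise bound from the first part together with the isometry $\|\nabla\eta(t)\|=\|\nabla\eta_1\|$ gives the matching upper bound on the $\limsup$. The norms therefore converge, so the triples converge in norm to $(\nabla\eta,\nabla\eta(0),\nabla\eta(1))$; since every subsequence admits a further subsequence with this same limit, the whole sequence converges in norm, as claimed.
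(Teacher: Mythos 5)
Your overall architecture matches the paper's: extract a weak limit, show it is a weak solution, identify it via an energy/uniqueness argument when a smooth solution exists, and upgrade to norm convergence using weak convergence plus convergence of norms. But there is a genuine error at the foundation of the first step. You assert that nonnegative curvature makes the transition maps between consecutive tangent cones nonexpanding, so that $\|\nabla\eta_Q(t)\|\le\|\nabla\eta_1\|$ follows by composition. The Alexandrov comparison gives the \emph{opposite} inequality: in nonnegative curvature the approximate parallel transport $P_i(v)=j_v^\prime(0)$, where $j_v$ is the Jacobi field with $j_v(0)=0$ and $j_v(1)=v$, satisfies $|P_i(v)|\ge|v|$, and the backward recursion from $t=1$ to $t=0$ composes maps that are norm non-decreasing, i.e.\ potentially expanding. (The paper makes this point explicitly in Remark \ref{added}: it is the extrinsic Ambrosio--Gigli construction whose maps are nonexpanding; the intrinsic maps $A_i^{-1}$ used here are not.) The uniform control $\sup_{t\in[0,1]} \left| \|{\mathcal V}_Q(t)\|-1 \right| \to 0$ is therefore not a soft consequence of comparison; it is the content of Petrunin's Lemma 1.8 (equation (\ref{5.24})), which uses crucially that $c$ lies in the interior of a minimizing geodesic to bound the accumulated expansion. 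Without this input, both your weak-compactness bound and the energy inequalities $\|\nabla\eta_{\infty,0}\|^2\le\|\nabla\eta_1\|^2$ and $\int_0^1\|\nabla\eta_\infty(t)\|^2\,dt\le\|\nabla\eta_1\|^2$ --- which are exactly hypotheses (4) and (5) of Lemma \ref{lemma4.17}, and which you need both for your weak--strong uniqueness step and for the matching upper bound on the $\limsup$ of the norms --- are unjustified.

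A secondary gap: you defer the verification that the discrete defects vanish (``the main obstacle'') without indicating the mechanism, and you attribute it to Alexandrov comparison. In the paper this step does not use comparison geometry at all; it is carried out by explicit Jacobi-field and projection estimates (Lemma \ref{lemest}, (\ref{5.12}), (\ref{5.15})) relying on the regularity hypothesis (\ref{5.1}) on $\phi$, leading to (\ref{5.27}). The remainder of your argument --- the identification of the limit via a Lemma \ref{lemma4.17}-type statement and the Hilbert-space fact that weak convergence together with convergence of norms implies norm convergence --- coincides with the paper's proof.
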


\begin{remark} \label{remark1.4} In the setting of 
Theorem \ref{theorem1.3}, we can say that
$\nabla \eta_{\infty, 0}$ is the parallel transport of $\nabla \eta_1$
along $c$ to $T_{c(0)} P(M)$.
\end{remark}

\begin{remark} \label{remark1.5} We are assuming that $M$ has nonnegative sectional curvature
in order to apply some geometric results from \cite{Petrunin (1998)}.
It is likely that this assumption could be removed.
\end{remark}

\begin{remark} \label{remark1.7} A result related to 
Theorem \ref{theorem1.3} was proven by
Ambrosio and Gigli when $M = \R^n$ \cite[Theorem 5.14]{Ambrosio-Gigli (2008)},
and extended to general $M$ by Gigli \cite[Theorem 4.9]{Gigli (2012)}.
As explained in \cite{Ambrosio-Gigli (2008),Gigli (2012)}, 
the construction of parallel transport there can be considered to be
extrinsic, in that it is based on embedding the (linear) tangent cones
into a Hilbert space and applying projection operators to form the
approximate parallel transports.  Although we instead use Petrunin's intrinsic
construction, there are some similarities between the two constructions; see 
Remark \ref{added}.  We use some techniques from \cite{Ambrosio-Gigli (2008)},
especially the idea of a weak solution to the parallel transport equation.
\end{remark}

\begin{remark} \label{remark1.6} 
Besides its inherent naturality, the intrinsic construction of parallel 
transport given here is likely to allow for extensions. For example,
using the results of \cite{Lott (2016)}, it seems likely that
Petrunin's construction could be extended to define parallel transport
along Wasserstein geodesics
of absolutely continuous measures on submanifolds of $M$. 
In the present paper we have done this
when the submanifolds have dimension zero 
or codimension zero.
\end{remark}

The structure of this paper is as follows.
In Section \ref{section4} 
we discuss weak solutions to the
parallel transport equation. In Section \ref{section5} we prove Theorem 
\ref{theorem1.3}.

I thank Takumi Yokota and Nicola Gigli for references to the literature.

\section{Weak solutions to the parallel transport equation} \label{section4}

Let $M$ be a compact connected Riemannian manifold without boundary.
Put
\begin{equation} \label{4.1}
P^\infty(M) = \{ \rho \: \dvol_M \: : \: \rho \in C^\infty(M),
\rho> 0, \int_M \rho \: \dvol_M \: = \: 1 \}.
\end{equation}
Given $\phi \in C^\infty(M)$, define a vector field $V_\phi$ on
$P^\infty(M)$ by saying that for $F \in C^\infty(P^\infty(M))$,
\begin{equation} \label{4.2}
(V_\phi F)(\rho \dvol_M) \: = \: 
\frac{d}{d\epsilon} \Big|_{\epsilon = 0} 
F \left( \rho \dvol_M \: - \: \epsilon \: \nabla^i ( \rho \nabla_i \phi) \dvol_M \right).
\end{equation}
The map $\phi \rightarrow V_\phi$ passes to an isomorphism
$C^\infty(M)/\R \rightarrow T_{\rho \dvol_M} P^\infty(M)$.
Otto's Riemannian metric on $P^\infty(M)$ is given \cite{Otto (2001)} by
\begin{align} \label{4.3}
\langle V_{\phi_1}, V_{\phi_2} \rangle (\rho \dvol_M) \: & = \:
\int_M \langle \nabla \phi_1, \nabla \phi_2 \rangle \: \rho \: \dvol_M \\
&  = \: - \: 
\int_M  \phi_1 \nabla^i ( \rho \nabla_i \phi_2) \: \dvol_M. \notag
\end{align}
In view of (\ref{4.2}), we write $\delta_{V_{\phi}} \rho \: = \: - \:  \nabla^i ( \rho \nabla_i \phi)$.
Then
\begin{equation} \label{4.4}
\langle V_{\phi_1}, V_{\phi_2} \rangle (\rho \dvol_M) \:  = \:
\int_M  \phi_1 \: \delta_{V_{\phi_2}} \rho \: \dvol_M
\: = \: \int_M  \phi_2 \: \delta_{V_{\phi_1}} \rho \: \dvol_M.
\end{equation}

To write the equation for parallel transport, let
$c \: : \: [0,1] \rightarrow P^\infty(M)$ be a smooth curve. We write
$c(t) \: = \: \mu_t \: = \: \rho(t) \: \dvol_M$ 
and define $\phi(t) \in C^\infty(M)$, up to a constant, 
by $\frac{dc}{dt} \: = \: V_{\phi(t)}$. 
This is the same as saying
\begin{equation} \label{4.5}
\frac{\partial \rho}{\partial t} + 
\nabla^j \left( \rho \nabla_j \phi \right) = 0.
\end{equation}
Let $V_{\eta(t)}$ be a vector field along $c$,
with $\eta(t) \in C^\infty(M)$.
The equation for $V_{\eta}$ to be parallel along $c$
\cite[Proposition 3]{Lott (2008)} is
\begin{equation} \label{4.6} 
\nabla_i \left( \rho \left( \nabla^i \frac{\partial \eta}{\partial t} \: + \:
\nabla_j \phi \: \nabla^i \nabla^j \eta \right) \right) \: = \: 0.
\end{equation}

\begin{lemma} \label{lemma4.7} 
\cite[Lemma 5]{Lott (2008)} If $\eta, \overline{\eta}$ 
are solutions of (\ref{4.6}) then
$\int_M \langle \nabla \eta, \nabla \overline{\eta} \rangle \: d\mu_t$
is constant in $t$.
\end{lemma}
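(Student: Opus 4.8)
The plan is to show directly that the derivative of
$I(t) := \int_M \langle \nabla \eta, \nabla \overline{\eta} \rangle \: \rho \: \dvol_M$
vanishes, using the continuity equation (\ref{4.5}) to handle the variation of the density and the parallel transport equation (\ref{4.6}) to handle the variation of $\eta$ and $\overline{\eta}$. Writing a dot for $\partial/\partial t$ and differentiating under the integral sign, I would record
\begin{equation*}
\frac{dI}{dt} = \int_M \left( \langle \nabla \dot\eta, \nabla \overline{\eta} \rangle + \langle \nabla \eta, \nabla \dot{\overline{\eta}} \rangle \right) \rho \: \dvol_M + \int_M \langle \nabla \eta, \nabla \overline{\eta} \rangle \: \dot\rho \: \dvol_M,
\end{equation*}
so that the task reduces to showing that the three resulting contributions cancel in pairs.

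For the last term I would substitute $\dot\rho = - \nabla^j(\rho \nabla_j \phi)$ from (\ref{4.5}) and integrate by parts, moving the divergence onto $\langle \nabla \eta, \nabla \overline{\eta} \rangle$; since $M$ has no boundary there is no boundary contribution. This yields $\int_M \rho \: \nabla^j \phi \: \nabla_j \langle \nabla \eta, \nabla \overline{\eta} \rangle \: \dvol_M$, and expanding $\nabla_j \langle \nabla \eta, \nabla \overline{\eta} \rangle = \nabla_j \nabla_i \eta \: \nabla^i \overline{\eta} + \nabla_i \eta \: \nabla_j \nabla^i \overline{\eta}$ produces two terms, each involving the Hessian of one of the two functions contracted with $\nabla \phi$ and the gradient of the other.

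For the first two terms I would test the parallel transport equation against the complementary function. Pairing (\ref{4.6}) for $\eta$ with $\overline{\eta}$, the identity $\int_M \overline{\eta} \: \nabla_i \left( \rho \left( \nabla^i \dot\eta + \nabla_j \phi \: \nabla^i \nabla^j \eta \right) \right) \dvol_M = 0$, integrated by parts once, gives
\begin{equation*}
\int_M \langle \nabla \dot\eta, \nabla \overline{\eta} \rangle \: \rho \: \dvol_M = - \int_M \rho \: \nabla_j \phi \: \nabla^i \nabla^j \eta \: \nabla_i \overline{\eta} \: \dvol_M,
\end{equation*}
and the symmetric computation expresses the $\dot{\overline{\eta}}$ term analogously with the roles of $\eta$ and $\overline{\eta}$ interchanged. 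Comparing with the two Hessian terms from the $\dot\rho$ contribution, I would invoke the symmetry of the Hessian, $\nabla_i \nabla_j \eta = \nabla_j \nabla_i \eta$ (and likewise for $\overline{\eta}$), which makes each Hessian term equal to $\Hess \eta \, (\nabla \phi, \nabla \overline{\eta})$ (resp. $\Hess \overline{\eta} \, (\nabla \phi, \nabla \eta)$) regardless of the order of contraction; the contributions then cancel in pairs and $\frac{dI}{dt} = 0$.

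The argument is essentially bookkeeping: the only points requiring care are the two signs coming from integration by parts and the matching of raised and lowered indices. I expect the main (minor) obstacle to be arranging the index contractions so that the cancellation between the density term and the two parallel transport terms is manifest, the whole cancellation resting on the symmetry of the Hessian of a function.
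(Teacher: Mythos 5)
Your computation is correct: the three contributions to $\frac{d}{dt}\int_M \langle \nabla \eta, \nabla \overline{\eta} \rangle \, \rho \, \dvol_M$ do cancel exactly as you describe, the signs from the two integrations by parts check out, and the cancellation indeed rests only on the symmetry of the Hessian together with the fact that (\ref{4.6}) can be used after pairing against a test function. Note that the paper itself gives no proof of this lemma; it is quoted from \cite[Lemma 5]{Lott (2008)}, where the statement is an instance of metric compatibility of the formal Levi--Civita connection on $P^\infty(M)$ (parallel fields have constant inner product). Your argument is the direct, PDE-level unpacking of that fact, in the same spirit as the computation the paper does carry out for Lemma \ref{lemma4.9}; indeed, your identity for $\int_M \langle \nabla \dot\eta, \nabla \overline{\eta}\rangle \, \rho \, \dvol_M$ is exactly the third-to-fourth line of (\ref{4.12}), so an alternative route would be to apply Lemma \ref{lemma4.9} with $f = \overline{\eta}$ and symmetrize. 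Either way, the proof is sound and self-contained.
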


\begin{lemma} \label{lemma4.8}
Given $\eta_1 \in C^\infty(M)$, there is at most one solution of
(\ref{4.6}) with $\eta(1) = \eta_1$, up to time-dependent additive
constants.
\end{lemma}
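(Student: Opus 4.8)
The plan is to exploit the linearity of (\ref{4.6}) together with the conservation statement of Lemma \ref{lemma4.7}. The key observation is that (\ref{4.6}) is linear in $\eta$: the coefficients $\rho$ and $\nabla \phi$ are determined by the curve $c$ and do not depend on $\eta$. Hence if $\eta$ and $\overline{\eta}$ are two solutions of (\ref{4.6}) with $\eta(1) = \overline{\eta}(1) = \eta_1$, then their difference $\psi = \eta - \overline{\eta}$ is again a solution of (\ref{4.6}), now satisfying $\psi(1) = 0$. The goal reduces to showing that $\psi(t)$ is spatially constant for each $t$, which is precisely the statement that $\eta$ and $\overline{\eta}$ differ by a time-dependent additive constant.

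First I would apply Lemma \ref{lemma4.7} with both arguments equal to $\psi$, obtaining that $t \mapsto \int_M |\nabla \psi|^2 \, d\mu_t$ is constant on $[0,1]$. Evaluating at $t = 1$, where $\psi(1) = 0$ and hence $\nabla \psi(1) = 0$, shows that this constant is zero, so $\int_M |\nabla \psi|^2 \, d\mu_t = 0$ for every $t \in [0,1]$. Finally, since $c(t) = \rho(t) \, \dvol_M$ lies in $P^\infty(M)$, the density $\rho(t)$ is strictly positive by (\ref{4.1}); thus $\int_M |\nabla \psi|^2 \, \rho(t) \, \dvol_M = 0$ forces $\nabla \psi(t) = 0$ pointwise on the connected manifold $M$, so that $\psi(t)$ is a constant depending only on $t$.

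I do not expect a serious obstacle here: the entire analytic content is packaged into Lemma \ref{lemma4.7}, whose proof is the conservation-of-inner-product computation (an integration by parts using (\ref{4.5}) and (\ref{4.6})). The only point worth flagging explicitly is the strict positivity of $\rho$, which is built into the definition of $P^\infty(M)$ and is exactly what upgrades the vanishing of the weighted $L^2$ norm of $\nabla \psi$ to the pointwise vanishing of $\nabla \psi$, and hence — using connectedness of $M$ — to the spatial constancy of $\psi(t)$.
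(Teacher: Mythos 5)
Your proof is correct and follows exactly the paper's argument: reduce by linearity to the case $\eta_1 = 0$, apply Lemma \ref{lemma4.7} to conclude $\nabla \eta(t) = 0$, and deduce spatial constancy. The paper states this in two sentences; your version merely spells out the role of the strict positivity of $\rho$, which is a reasonable detail to make explicit.
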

\begin{proof}
By linearity, it suffices to consider the case when $\eta_1 = 0$.
From Lemma \ref{lemma4.7}, 
$\nabla \eta(t) = 0$ and so $\eta(t)$ is spatially constant.
\end{proof}

For consistency with later notation, we will write
$C^\infty([0,1]; C^\infty(M))$ for $C^\infty([0,1] \times M)$.

\begin{lemma} (c.f. \cite[(5.8)]{Ambrosio-Gigli (2008)}) \label{lemma4.9}
Given $f \in C^\infty([0,1]; C^\infty(M))$, if $\eta$ satisfies (\ref{4.6}) 
then
\begin{equation} \label{4.10}
\frac{d}{dt} \int_M \langle \nabla f, \nabla \eta \rangle \: d\mu_t =
\int_M \langle \nabla \frac{\partial f}{\partial t}, 
\nabla \eta \rangle \: d\mu_t +  
\int_M \Hess_f(\nabla \eta, \nabla \phi) \: d\mu_t.
\end{equation}
\end{lemma}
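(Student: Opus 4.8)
The plan is to differentiate the integrand $\langle \nabla f, \nabla \eta \rangle \, \rho$ under the integral sign and reorganize the resulting terms using the two structural equations at hand: the continuity equation (\ref{4.5}) for $c$ and the parallel transport equation (\ref{4.6}) for $\eta$. Since $M$ is closed, every integration by parts below is boundary-free.

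First I would write $d\mu_t = \rho \, \dvol_M$ and differentiate, producing three terms, coming respectively from the time derivatives of $\nabla f$, of $\nabla \eta$, and of $\rho$:
\[
\int_M \langle \nabla \tfrac{\partial f}{\partial t}, \nabla \eta \rangle \, \rho \, \dvol_M + \int_M \langle \nabla f, \nabla \tfrac{\partial \eta}{\partial t} \rangle \, \rho \, \dvol_M + \int_M \langle \nabla f, \nabla \eta \rangle \, \tfrac{\partial \rho}{\partial t} \, \dvol_M.
\]
The first term is already the first term on the right-hand side of (\ref{4.10}), so the task reduces to showing that the last two integrals sum to $\int_M \Hess_f(\nabla \eta, \nabla \phi) \, d\mu_t$.

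For the $\frac{\partial \rho}{\partial t}$ integral I would substitute the continuity equation in the form $\frac{\partial \rho}{\partial t} = - \nabla^j(\rho \nabla_j \phi)$ and integrate by parts, moving the divergence onto $\langle \nabla f, \nabla \eta \rangle$. Expanding $\nabla_j \langle \nabla f, \nabla \eta \rangle$ by the product rule splits this into a term involving $\Hess_f$ and a term involving the Hessian of $\eta$. The former is exactly the desired $\int_M \Hess_f(\nabla \eta, \nabla \phi) \, d\mu_t$, while the latter is a leftover integral $\int_M \rho \, \nabla^i f \, \nabla^j \phi \, \nabla_i \nabla_j \eta \, \dvol_M$.

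The decisive step, and the only place the parallel transport hypothesis enters, is to show that this leftover integral cancels the $\langle \nabla f, \nabla \frac{\partial \eta}{\partial t} \rangle$ integral. To see this, I would pair equation (\ref{4.6}) against $f$ and integrate by parts once, obtaining
\[
\int_M \rho \, \nabla^i f \, \nabla_i \tfrac{\partial \eta}{\partial t} \, \dvol_M + \int_M \rho \, \nabla^i f \, \nabla^j \phi \, \nabla_i \nabla_j \eta \, \dvol_M = 0,
\]
where I have used the symmetry $\nabla_i \nabla_j \eta = \nabla_j \nabla_i \eta$ of the Hessian. The two summands here are precisely the $\langle \nabla f, \nabla \frac{\partial \eta}{\partial t} \rangle$ integral and the leftover term, so their sum vanishes and (\ref{4.10}) follows. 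I expect the only real care to be bookkeeping: tracking index placement and using the symmetry of $\Hess_\eta$ so that the integration-by-parts output of (\ref{4.6}) matches the two surviving integrals exactly.
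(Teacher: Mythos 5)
Your proposal is correct and is essentially the paper's own proof: the paper likewise differentiates under the integral, substitutes the continuity equation for $\frac{\partial \rho}{\partial t}$ and integrates by parts, and uses (\ref{4.6}) (paired against $f$ via integration by parts) together with the symmetry of $\Hess_\eta$ to cancel the $\langle \nabla f, \nabla \frac{\partial \eta}{\partial t}\rangle$ term against the leftover $\int_M \rho \, \nabla^i f \, \nabla^j \phi \, \nabla_i \nabla_j \eta \: \dvol_M$. The only difference is bookkeeping order, not substance.
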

\begin{proof}
We have
\begin{align} \label{4.11}
\frac{d}{dt} \int_M \langle \nabla f, \nabla \eta \rangle \: d\mu_t = &
\frac{d}{dt} \int_M \langle \nabla f, \nabla \eta \rangle \: \rho \:
\dvol_M \\ = &
\int_M \langle \nabla \frac{\partial f}{\partial t}, 
\nabla \eta \rangle \: \rho \:
\dvol_M + 
\int_M \langle \nabla f, \nabla \frac{\partial \eta}{\partial t} 
\rangle \: \rho \:
\dvol_M + \notag \\
& \int_M \langle \nabla f, \nabla \eta \rangle \: 
\frac{\partial \rho}{\partial t}
 \:
\dvol_M \notag
\end{align}
Then
\begin{align} \label{4.12}
& \frac{d}{dt} \int_M \langle \nabla f, \nabla \eta \rangle \: d\mu_t
- \int_M \langle \nabla \frac{\partial f}{\partial t}, 
\nabla \eta \rangle \: d\mu_t \\
& = \int_M (\nabla_i f) \:  
\left( \nabla^i \frac{\partial \eta}{\partial t} \right) 
\: \rho \:
\dvol_M - 
\int_M (\nabla_i f) \: (\nabla^i \eta) \: 
\nabla^j \left( \rho \nabla_j \phi \right) \:
\dvol_M \notag \\
& = - \int_M  f \: \nabla_i 
\left( \rho \nabla^i \frac{\partial \eta}{\partial t} \right) 
\: \dvol_M - 
\int_M (\nabla_i f) \: (\nabla^i \eta) \: 
\nabla^j \left( \rho \nabla_j \phi \right) \:
\dvol_M \notag \\
& = \int_M f 
\nabla_i \left( \rho 
(\nabla_j \phi) \: (\nabla^i \nabla^j \eta) \right) \:
\dvol_M + 
\int_M \nabla^j ((\nabla_i f) \: (\nabla^i \eta)) \: 
(\nabla_j \phi) \: \rho \: \dvol_M \notag \\
& = - \int_M (\nabla_i f) \: 
(\nabla_j \phi) \: (\nabla^i \nabla^j \eta) \: \rho \:
\dvol_M \notag \\ 
& + \int_M \nabla^j ((\nabla_i f) \:  (\nabla^i \eta)) \: 
(\nabla_j \phi) \: \rho \: \dvol_M \notag \\
& =
\int_M ( \nabla^j \nabla_i f) \:  (\nabla^i \eta) \: 
(\nabla_j \phi) \: \rho \: \dvol_M \notag \\
& =
\int_M \Hess_f(
\nabla \eta,
\nabla \phi) \: d\mu_t. \notag
\end{align}
This proves the lemma.
\end{proof}

We now weaken the regularity assumptions.
Let $P^{ac}(M)$ denote the absolutely continuous
probability measures on $M$ with
full support. Suppose that $c \: : \: [0,1] \rightarrow P^{ac}(M)$ is a
Lipschitz curve whose derivative $c^\prime(t) \in T_{c(t)} P(M)$
exists for almost all $t$. We can write $c^\prime(t) = V_{\phi(t)}$
with $\nabla \phi(t) \in L^2(TM, dc(t))$. By the Lipschitz assumption,
the essential supremum over $t \in [0,1]$ of 
$\| \nabla \phi(t) \|_{L^2(TM, dc(t))}$ is finite. As before, we write
$c(t) = \mu_t$.

\begin{definition} \label{definition4.13}
Let $c \: : \: [0,1] \rightarrow P^{ac}(M)$ be a
Lipschitz curve whose derivative $c^\prime(t) \in T_{c(t)} P(M)$
exists for almost all $t$.
Given $\nabla \eta_0 \in L^2(TM, d\mu_0)$, $\nabla
\eta_1 \in L^2(TM, d\mu_1)$ and
$\nabla \eta \in L^2([0,1]; L^2(TM, d\mu_t))$, we say that
$(\nabla \eta, \nabla \eta_0, \nabla \eta_1)$ 
is a weak solution of the parallel transport equation if
\begin{align} \label{4.14}
& \int_M \langle \nabla f(1), \nabla \eta_1 \rangle \: d\mu_1 -
 \int_M \langle \nabla f(0), \nabla \eta_0 \rangle \: d\mu_0 = \\
& \int_0^1 
\int_M \left( \left\langle \nabla \frac{\partial f}{\partial t}, 
\nabla \eta \right\rangle \: + \:
\Hess_f(\nabla \eta, \nabla \phi) \right) \: d\mu_t\: dt \notag
\end{align}
for all $f \in C^\infty([0,1]; C^\infty(M))$.
\end{definition}

\begin{remark}
In what follows, there would be analogous results if we replaced
$C^\infty([0,1]; C^\infty(M))$ everywhere
by  $C^0([0,1]; C^2(M)) \cap C^1([0,1]; C^1(M))$. 
We will stick with $C^\infty([0,1]; C^\infty(M))$
for concreteness. 
\end{remark}

From Lemma \ref{lemma4.9}, if 
$c$ is a smooth curve in $P^\infty(M)$ and
$\eta \in C^\infty([0,1]; C^\infty(M))$ is a solution
of (\ref{4.6}) then 
$(\nabla \eta, \nabla \eta(0), \nabla \eta(1))$ is a weak solution of the
parallel transport equation. We now prove the converse.

\begin{lemma} \label{lemma4.15}
Suppose that $c$ is a smooth curve in $P^\infty(M)$.
Given $\eta_0, \eta_1 \in C^\infty(M)$ and $\eta \in C^\infty([0,1];
C^\infty(M))$, if $(\nabla \eta, \nabla \eta_0, \nabla \eta_1)$ 
is a weak solution of the parallel transport equation then
$\eta$ satisfies (\ref{4.6}), $\eta(0) = \eta_0$ and $\eta(1) = \eta_1$ 
(modulo constants).
\end{lemma}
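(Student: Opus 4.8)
The plan is to reverse the computation in Lemma~\ref{lemma4.9}. Since $c$ is smooth in $P^\infty(M)$ and $\eta \in C^\infty([0,1]; C^\infty(M))$, everything in sight is smooth, so I can freely integrate by parts. The strategy is to start from the weak formulation (\ref{4.14}), use the fundamental theorem of calculus to rewrite the left-hand side as a time integral, and then subtract the right-hand side to obtain an identity of the form $\int_0^1 \int_M f(t) \cdot G(t) \: \dvol_M \: dt = 0$ for all test functions $f$, where $G$ is a smooth expression involving $\eta$, $\phi$, and $\rho$. By the fundamental lemma of the calculus of variations, this forces $G \equiv 0$, which is exactly equation (\ref{4.6}).

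First I would apply Lemma~\ref{lemma4.9} itself: since $c$ is smooth and $\eta$ is smooth, the quantity $t \mapsto \int_M \langle \nabla f, \nabla \eta \rangle \: d\mu_t$ is $C^1$, and I can write its endpoint difference as $\int_0^1 \frac{d}{dt} \int_M \langle \nabla f, \nabla \eta \rangle \: d\mu_t \: dt$. However, the point is that Lemma~\ref{lemma4.9} was derived \emph{assuming} $\eta$ solves (\ref{4.6}), so I cannot invoke it directly; instead I redo its computation (\ref{4.12}) without that assumption. Carrying out the same integrations by parts as in (\ref{4.11})--(\ref{4.12}) but keeping the term $\int_M \langle \nabla f, \nabla \frac{\partial \eta}{\partial t} \rangle \rho \: \dvol_M$ general, I obtain
\begin{equation} \label{eq:415a}
\frac{d}{dt} \int_M \langle \nabla f, \nabla \eta \rangle \: d\mu_t =
\int_M \langle \nabla \tfrac{\partial f}{\partial t}, \nabla \eta \rangle \: d\mu_t
+ \int_M \Hess_f(\nabla \eta, \nabla \phi) \: d\mu_t
- \int_M f \: \nabla_i \Big( \rho \big( \nabla^i \tfrac{\partial \eta}{\partial t} + \nabla_j \phi \, \nabla^i \nabla^j \eta \big) \Big) \: \dvol_M,
\end{equation}
the last term being precisely the contribution that vanishes under (\ref{4.6}). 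Integrating (\ref{eq:415a}) over $t \in [0,1]$ and comparing with the weak equation (\ref{4.14}) — whose left side equals the endpoint difference — the first two terms cancel, leaving
\begin{equation} \label{eq:415b}
\int_0^1 \int_M f \: \nabla_i \Big( \rho \big( \nabla^i \tfrac{\partial \eta}{\partial t} + \nabla_j \phi \, \nabla^i \nabla^j \eta \big) \Big) \: \dvol_M \: dt = 0
\end{equation}
for all $f \in C^\infty([0,1]; C^\infty(M))$.

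Since $f$ ranges over all smooth functions on $[0,1] \times M$, equation (\ref{eq:415b}) forces the smooth integrand $\nabla_i ( \rho ( \nabla^i \frac{\partial \eta}{\partial t} + \nabla_j \phi \, \nabla^i \nabla^j \eta ) )$ to vanish identically, which is (\ref{4.6}). To pin down the boundary data, I then allow $f$ to be time-independent and supported near $t=0$ or $t=1$; more directly, once (\ref{4.6}) holds, Lemma~\ref{lemma4.9} applies to $\eta$, so (\ref{4.14}) reduces to $\int_M \langle \nabla f(1), \nabla(\eta(1) - \eta_1) \rangle \: d\mu_1 = \int_M \langle \nabla f(0), \nabla(\eta(0) - \eta_0) \rangle \: d\mu_0$ for all $f$; choosing $f$ with prescribed values and time-derivatives at the two endpoints separates the two boundary terms and yields $\nabla \eta(0) = \nabla \eta_0$ and $\nabla \eta(1) = \nabla \eta_1$, i.e.\ equality modulo constants.

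I do not expect a genuine obstacle here, since the smoothness hypotheses remove all analytic difficulty; the only point requiring care is the bookkeeping in re-deriving (\ref{eq:415a}) without presupposing (\ref{4.6}), and the standard density argument separating the two endpoint terms. The essential content is simply that Definition~\ref{definition4.13} is the weak form whose smooth solutions are exactly the classical ones.
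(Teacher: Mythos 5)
Your argument is correct and is essentially the paper's own proof: the paper likewise integrates the Lemma \ref{lemma4.9} computation over $[0,1]$ without assuming (\ref{4.6}), keeping the extra divergence term (this is equation (\ref{4.16})), then tests first against $f$ with $f(0)=f(1)=0$ to get (\ref{4.6}) and then against all $f$ to identify the endpoints. The one bookkeeping slip is that your intermediate claim that $\int_0^1\int_M f\,\nabla_i\bigl(\rho\bigl(\nabla^i\tfrac{\partial\eta}{\partial t}+\nabla_j\phi\,\nabla^i\nabla^j\eta\bigr)\bigr)\dvol_M\,dt=0$ holds for \emph{all} $f$ is premature: integrating your derivative identity produces the endpoint difference with $\nabla\eta(0),\nabla\eta(1)$, while the left side of (\ref{4.14}) involves $\nabla\eta_0,\nabla\eta_1$, and these mismatched boundary terms do not cancel for general $f$ until after the endpoint identification is proved. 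This is harmless --- restricting first to $f$ vanishing at $t=0,1$ already yields (\ref{4.6}), after which your final paragraph correctly recovers $\nabla\eta(0)=\nabla\eta_0$ and $\nabla\eta(1)=\nabla\eta_1$ --- but the two steps should be taken in that order, as in the paper.
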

\begin{proof}
In this case, 
equation (\ref{4.14}) is equivalent to
\begin{align} \label{4.16}
& \int_M \langle \nabla f(1), \nabla \eta_1 \rangle \: d\mu_1 -
 \int_M \langle \nabla f(0), \nabla \eta_0 \rangle \: d\mu_0 = \\
&  \int_M \langle \nabla f(1), \nabla \eta(1) \rangle \: d\mu_1 -
 \int_M \langle \nabla f(0), \nabla \eta(0) \rangle \: d\mu_0
+ \notag \\
& \int_0^1 \int_M f 
\nabla_i \left(
\nabla^i \frac{\partial \eta}{\partial t} 
+ \nabla_j \phi \nabla^i \nabla^j \eta \right)
\: d\mu_t\: dt. \notag
\end{align}
Taking $f \in C^\infty([0,1]; C^\infty(M))$ with $f(0) = f(1) = 0$,
it follows that (\ref{4.6}) must hold. Then taking all
 $f \in C^\infty([0,1]; C^\infty(M))$, it follows that
$\nabla \eta_0 = \nabla \eta(0)$ and $\nabla \eta_1 = \nabla \eta(1)$.
Hence $\eta(0) = \eta_0$ and
$\eta(1) = \eta_1$ (modulo constants).
\end{proof}

\begin{lemma} \label{lemma4.17}
Suppose that $c$ is a smooth curve in $P^\infty(M)$.
Given $\nabla \eta_0 \in L^2(TM, d\mu_0)$, 
$\nabla \eta_1 \in L^2(TM, d\mu_1)$,
$\nabla \eta \in L^2([0,1]; L^2(TM, d\mu_t))$ and 
$f \in C^\infty([0,1]; C^\infty(M))$,
suppose that
\begin{enumerate}
\item $(\nabla \eta, \nabla \eta_0, \nabla \eta_1)$ 
is a weak solution to the parallel
transport equation,
\item $f$ satisfies (\ref{4.6}), 
\item $\nabla f(1) = \nabla \eta_1$,
\item
\begin{equation} \label{4.18} 
\int_M | \nabla \eta_0 |^2 \: d\mu_0 \le 
\int_M | \nabla \eta_1 |^2 \: d\mu_1
\end{equation}
and
\item
\begin{equation} \label{4.19} 
\int_0^1 \int_M | \nabla \eta |^2 \: d\mu_t \: dt \le 
\int_M | \nabla \eta_1 |^2 \: d\mu_1
\end{equation}
\end{enumerate}
Then $\nabla f(0) = \nabla \eta_0$, and $\nabla f(t) =  \nabla \eta(t)$
for almost all $t$. 
\end{lemma}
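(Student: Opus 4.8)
The plan is to test the weak equation (\ref{4.14}) against the time-modulated function $g = h(t)\, f$, with $h \in C^\infty([0,1])$ arbitrary, and to exploit the fact that $f$ is an \emph{exact} solution of (\ref{4.6}). The point is that exactness of $f$ makes the bulk pairing against $\nabla \eta$ collapse, leaving only a boundary-type term that one can control with the energy inequalities (\ref{4.18}) and (\ref{4.19}).

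First I would record two consequences of hypothesis (2). Since $c$ is smooth in $P^\infty(M)$, the vector field $U_f$ with components $U_f^i = \nabla^i \frac{\partial f}{\partial t} + \nabla_j \phi \, \nabla^i \nabla^j f$ is smooth, and (\ref{4.6}) says precisely that $\nabla_i(\rho \, U_f^i) = 0$, i.e.\ $U_f(t)$ is divergence-free with respect to $\mu_t$. Integrating by parts gives $\int_M \langle \nabla g, U_f \rangle \, d\mu_t = 0$ for every smooth $g$; since $\nabla \eta(t) \in L^2(TM, d\mu_t)$ lies in the closure of smooth gradients and $U_f(t)$ is bounded, it follows that $\int_M \langle \nabla \eta(t), U_f(t) \rangle \, d\mu_t = 0$ for almost every $t$. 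Second, applying Lemma \ref{lemma4.7} with $\eta = \overline{\eta} = f$ together with hypothesis (3), the quantity $E := \int_M | \nabla f(t) |^2 \, d\mu_t$ is constant in $t$ and equals $\int_M | \nabla \eta_1 |^2 \, d\mu_1$.

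Next I would substitute $g = h(t)\, f$ into (\ref{4.14}). A direct computation gives $U_g = h'(t)\, \nabla f + h(t)\, U_f$, so the orthogonality from the first step annihilates the $U_f$ contribution and the right-hand side collapses to $\int_0^1 h'(t)\, I(t) \, dt$, where $I(t) := \int_M \langle \nabla \eta(t), \nabla f(t) \rangle \, d\mu_t$. Taking $h \equiv 1$ makes the right-hand side vanish and yields $\int_M \langle \nabla f(0), \nabla \eta_0 \rangle \, d\mu_0 = E$; combined with $\int_M | \nabla f(0) |^2 \, d\mu_0 = E$, the Cauchy--Schwarz inequality and hypothesis (4) force equality in Cauchy--Schwarz with matching norms, whence $\nabla \eta_0 = \nabla f(0)$, the first assertion. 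Feeding this endpoint value back in, the identity reads $E\,(h(1) - h(0)) = \int_0^1 h'(t)\, I(t) \, dt$ for all $h$; since $h(1) - h(0) = \int_0^1 h'(t)\, dt$ and $h'$ ranges over all of $C^\infty([0,1])$, the fundamental lemma of the calculus of variations gives $I(t) = E$ for almost every $t$.

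Finally I would close with an energy expansion: $\int_0^1 \int_M | \nabla \eta - \nabla f |^2 \, d\mu_t \, dt$ equals $\int_0^1 \int_M | \nabla \eta |^2 \, d\mu_t \, dt - 2 \int_0^1 I(t) \, dt + \int_0^1 \int_M | \nabla f |^2 \, d\mu_t \, dt$, which is at most $E - 2E + E = 0$ by hypothesis (5), the identity $I \equiv E$, and the constancy of $\int_M | \nabla f |^2 \, d\mu_t$. As the integrand is nonnegative, $\nabla \eta(t) = \nabla f(t)$ for almost every $t$. The one genuinely non-mechanical step is the choice of test function $g = h(t)\, f$ together with the observation that the exactness of $f$, i.e.\ the divergence-freeness of $U_f$, kills the bulk pairing; everything after that is Cauchy--Schwarz and a completion-of-squares bookkeeping.
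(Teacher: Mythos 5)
Your proposal is correct and follows essentially the same route as the paper: the paper tests the weak equation (\ref{4.14}) against $f$ and against $tf$ (your choices $h\equiv 1$ and $h(t)=t$), uses that the bulk integrand vanishes because $f$ solves (\ref{4.6}), invokes Lemma \ref{lemma4.7} for norm conservation, and closes with the same completion-of-squares/Cauchy--Schwarz argument at $t=0$ and in the time-integrated energy. Your use of a general modulating function $h(t)$ yields the slightly stronger pointwise identity $I(t)=E$, but this extra strength is not needed for the conclusion.
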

\begin{proof}
From (\ref{4.6}) (applied to $f$) and (\ref{4.14}), we have
\begin{equation} \label{4.20}
\int_M \langle \nabla f(0), \nabla \eta_0 \rangle \: d\mu_0 \: = \:
\int_M \langle \nabla f(1), \nabla \eta_1 \rangle \: d\mu_1 =
\int_M \langle \nabla \eta_1, \nabla \eta_1 \rangle \: d\mu_1.
\end{equation}
From Lemma \ref{lemma4.7},
\begin{equation} \label{4.21}
\int_M \langle \nabla f(0), \nabla f(0) \rangle \: d\mu_0 \: = \:
\int_M \langle \nabla f(1), \nabla f(1) \rangle \: d\mu_1 \: = \:
\int_M \langle \nabla \eta_1, \nabla \eta_1 \rangle \: d\mu_1.
\end{equation}
Then
\begin{equation} \label{4.22}
\int_M | \nabla (\eta_0 - f(0)) |^2 \: d\mu_0 \: = \:
\int_M |\nabla \eta_0|^2 \: d\mu_0 - \int_M |\nabla \eta_1|^2 \: d\mu_1 
\le 0.
\end{equation}
Thus $\nabla f(0) = \nabla \eta_0$ in $L^2(TM, d\mu_0)$.

Next, replacing $f$ by $tf$ in (\ref{4.14}) gives
\begin{equation} \label{4.23}
\int_0^1 \int_M 
\langle \nabla f, \nabla \eta \rangle \: d\mu_t \: dt \: = \:
\int_M \langle \nabla f(1), \nabla \eta_1 \rangle \: d\mu_1 \: = \:
\int_M \langle \nabla \eta_1, \nabla \eta_1 \rangle \: d\mu_1. 
\end{equation}
Then
\begin{align} \label{4.24}
& \int_0^1 \int_M |\nabla f - \nabla \eta|^2 \: d\mu_t \: dt \: = \\
& \int_0^1 \int_M |\nabla f|^2 \: d\mu_t \: dt \: - \: 2
\int_0^1 \int_M \langle \nabla f, \nabla \eta \rangle \: d\mu_t \: dt \: + \:
\int_0^1 \int_M |\nabla \eta|^2 \: d\mu_t \: dt \: =  \notag \\
& \int_M |\nabla f(1)|^2 \: d\mu_1 \: - \: 2
\int_M | \nabla \eta_1|^2 \: d\mu_1 \: + \:
\int_0^1 \int_M |\nabla \eta|^2 \: d\mu_t \: dt \: =  \notag \\
& \int_0^1 \int_M |\nabla \eta|^2 \: d\mu_t \: dt \: - \:
\int_M |\nabla \eta_1|^2 \: d\mu_1 \: \le \: 0. \notag
\end{align}
Thus $\nabla f(t) = \nabla \eta(t)$ in $L^2(TM, d\mu_t)$, for almost all $t$.
\end{proof}

\section{Parallel transport along Wasserstein geodesics} \label{section5}

\subsection{Parallel transport in a finite-dimensional Alexandrov space} 
\label{subsection5.1}

We recall the construction of parallel transport in a finite-dimensional
Alexandrov space $X$.
  
Let $c : [0,1] \rightarrow X$ be a geodesic segment that lies in the
interior of a minimizing geodesic. Then $T_{c(t)} X$ is an isometric
product of $\R$ with the normal cone $N_{c(t)} X$.
We want to construct a parallel transport map from
$N_{c(1)} X$ to $N_{c(0)} X$.

Given $Q \in \Z^+$ and $0 \le i \le Q-1$, define
${c}_i : [0,1] \rightarrow X$ by
${c}_i(u) = c \left( \frac{i + u}{Q} \right)$. 
We define an approximate parallel transport $P_i : N_{{c}_i(1)} X
\rightarrow N_{{c}_i(0)} X$ as follows.  Given
$v \in N_{{c}_i(1)} X$, let $\gamma : [0, \epsilon] \rightarrow X$
be a minimizing geodesic segment with $\gamma(0) = {c}_i(1)$ and
$\gamma^\prime(0) = v$. For each $s \in (0, \epsilon]$, let
$\mu_s : [0, 1] \rightarrow X$ be a minimizing geodesic with
$\mu_s(0) = {c}_i(0)$ and $\mu_s(1) = \gamma(s)$.
Let $w_s \in N_{{c}_i(0)} X$ 
be the normal projection of $\frac{1}{s} \mu_s^\prime(0) \in T_{{c}_i(0)} X$.
After passing to a sequence $s_i \rightarrow 0$, we can assume that
$\lim_{i \rightarrow \infty} w_{s_i} = w \in N_{{c}_i(0)} X$.
Then $P_i(v) = w$. If $X$ has nonnegative Alexandrov curvature then
$|w| \ge |v|$.

In \cite{Petrunin (1998)}, 
the approximate parallel transport from an appropriate dense subset
$L_Q \subset N_{c(1)} X$ to $N_{c(0)} X$ was defined to be
$P_0 \circ P_1 \circ \ldots \circ P_{Q-1}$. It was shown that by
taking $Q \rightarrow \infty$ and applying a diagonal argument,
in the limit one obtains an isometry from a dense subset of  
$N_{c(1)} X$ to $N_{c(0)} X$. This extends by continuity to an
isometry from $N_{c(1)} X$ to $N_{c(0)} X$. 

If $X$ is a smooth Riemannian manifold then $P_i$ is independent of
the choices and can be described as follows. Given
$v \in N_{{c}_i(1)} X$, let $j_v(u)$ be the Jacobi field along
$c$ with $j_v(0) = 0$ and $j_v(1) = v$.  (It is unique since $c$ is in
the interior of a minimizing geodesic.) Then
$P_i(v) = j_v^\prime(0)$.

\subsection{Construction of parallel transport along a 
Wasserstein geodesic of delta measures}
\label{subsection5.1.5}

Let $M$ be a compact connected Riemannian manifold without boundary.  Let
$\gamma : [0, 1] \rightarrow M$ be a geodesic segment that
lies in the interior of a minimizing geodesic. Let
$\Pi : T_{\gamma(1)} M \rightarrow T_{\gamma(0)} M$ be 
(reverse) parallel transport along $\gamma$. Put
$c(t) = \delta_{\gamma(t)} \in P(M)$. Then 
$\{c(t)\}_{t \in [0,1]}$ is a Wasserstein geodesic that
lies in the interior of a minimizing geodesic. 
We apply Petrunin's construction to define parallel transport directly
from the tangent cone $T_{c(1)} P(M)$ to the tangent cone
$T_{c(0)} P(M)$ (instead of the normal cones).
From \cite[Theorem 1.1]{Lott (2016)},
we know that $T_{c(t)}P(M) \cong P_2(T_{\gamma(t)} M)$.

\begin{proposition} \label{deltaprop} The parallel
transport map from $T_{c(1)} P(M) \cong P_2(T_{\gamma(1)} M)$ to
$T_{c(0)} P(M) \cong P_2(T_{\gamma(0)} M)$ is the map
$\mu \rightarrow \Pi_* \mu$.
\end{proposition}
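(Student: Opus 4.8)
The plan is to reduce Petrunin's construction on $P(M)$ to the ordinary construction on $M$, using the isometry $T_{c(t)}P(M) \cong P_2(T_{\gamma(t)}M)$ together with the fact that the $\delta$-measures form a totally geodesically embedded isometric copy of $M$ inside $P(M)$. Throughout I identify a tangent cone element with a measure $\nu \in P_2(T_{\gamma(t)}M)$, and I use the explicit form of geodesics issuing from a $\delta$-measure: the Wasserstein geodesic leaving $\delta_p$ with initial direction $\nu$ is $s \mapsto \big(v \mapsto \exp_p(sv)\big)_*\nu$. The geodesic direction of $c$ at $c(t)$ is the point mass $\delta_{\gamma'(t)}$, and under the splitting $T_{c(t)}P(M) = \R \times N_{c(t)}P(M)$ the $\R$-factor is the line $\{\delta_{r\gamma'(t)}\}_{r \in \R}$. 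Since $\Pi$ is a linear isometry carrying $\gamma'(1)$ to $\gamma'(0)$, the pushforward $\Pi_*$ is an isometry of Wasserstein cones carrying this line to the corresponding line; hence $\Pi_*$ respects the splitting, and it suffices to analyze the construction on the normal cones.

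First I would compute a single approximate-transport step $P_i$ explicitly. Write $p_0 = \gamma(i/Q)$, $p_1 = \gamma((i+1)/Q)$ and fix $\nu \in P_2(T_{p_1}M)$. The geodesic leaving $\delta_{p_1}$ in the direction $\nu$ is $\Gamma(s) = \big(v \mapsto \exp_{p_1}(sv)\big)_*\nu$, and because its source is a point mass, the minimizing geodesic $\mu_s$ from $\delta_{p_0}$ to $\Gamma(s)$ transports the mass at $p_0$ along the minimizing geodesics reaching $\supp \Gamma(s)$. Thus $\mu_s^\prime(0) = (w \mapsto V_{s,w})_*\nu$, where $V_{s,w} \in T_{p_0}M$ is determined by $\exp_{p_0}(V_{s,w}) = \exp_{p_1}(sw)$. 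A first-order expansion gives $V_{s,w} = V_0 + s\,J_w^\prime(0) + O(s^2)$, where $V_0 = \exp_{p_0}^{-1}(p_1)$ is the geodesic direction and $J_w$ is the Jacobi field along the segment with $J_w(0)=0$, $J_w(1)=w$; hence $s^{-1}V_{s,w} = s^{-1}V_0 + J_w^\prime(0) + O(s)$. The divergent term $s^{-1}V_0$ points along the geodesic direction, and the key observation is that the normal projection is invariant under translation of a measure by a multiple of $\gamma^\prime$: such a translation is a unit-speed geodesic that changes only the barycentric $\R$-coordinate of the splitting while fixing the $N$-component, so it preserves $\pr_N$. Letting $s \to 0$ therefore yields
\begin{equation}
P_i(\nu) \; = \; \pr_N\big( (P_i^M)_*\, \nu \big), \qquad \text{where } P_i^M(w) = J_w^\prime(0),
\end{equation}
and $P_i^M \colon T_{p_1}M \to T_{p_0}M$ is precisely the smooth-manifold approximate parallel transport step along $\gamma$ recalled in Subsection \ref{subsection5.1}.

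Next I would pass to the composition and the limit $Q \to \infty$. On $M$ the Jacobi step $P_i^M$ differs from the Riemannian parallel transport $\Pi_i^M$ over the $i$-th segment by a curvature term of size $O(Q^{-2})$; since $\Pi_i^M$ is orthogonal, its pushforward carries the normal cone isometrically to the normal cone, so the normal projection in the displayed identity becomes redundant up to an error controlled by this discrepancy. Composing the $Q$ steps and telescoping the parallel transports, the total discrepancy is $O(Q^{-1})$ and $P_0^M \circ \dots \circ P_{Q-1}^M \to \Pi$; this is exactly the statement that Petrunin's construction on the smooth manifold $M$ reproduces Riemannian parallel transport. Because the pushforward operation is continuous for $W_2$ on families of measures with uniformly bounded second moment, it follows that $P_0 \circ \dots \circ P_{Q-1} \to \Pi_*$, which is the claim. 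As a consistency check, on point-mass directions $\nu = \delta_w$ the entire construction takes place inside the embedded copy of $M$, where it is literally Petrunin's construction on $M$, yielding $\delta_w \mapsto \delta_{\Pi w} = \Pi_*\delta_w$.

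The main obstacle I expect is the bookkeeping around the normal projection: justifying its translation-invariance along the geodesic direction rigorously, and showing that the projections interleaved in the composition $P_0 \circ \dots \circ P_{Q-1}$ genuinely disappear in the limit rather than accumulating. This requires the uniform $O(Q^{-2})$ control of $P_i^M - \Pi_i^M$ together with estimates, uniform in $\nu$ over bounded subsets of $P_2(T_{\gamma(1)}M)$, for how $\pr_N$ distorts measures that are already nearly normal, so that the accumulated error stays $O(Q^{-1})$. The remaining ingredients — the explicit form of geodesics from $\delta$-measures, the first-order expansion producing the Jacobi field, and the convergence of the smooth-manifold construction to $\Pi$ — are comparatively routine.
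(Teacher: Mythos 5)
Your single-step computation is the same as the paper's: your $P_i^M(w)=J_w'(0)$ is exactly $(d\exp_{\gamma_i(0)})_{V_0}^{-1}(w)$ by the standard Jacobi-field formula $J_w(u)=(d\exp_{p_0})_{uV_0}(uJ_w'(0))$, and the paper likewise identifies $P_i=(d\exp_{\gamma_i(0)})^{-1}_*$ and lets the composition telescope to $\Pi_*$ as $Q\to\infty$. The genuine difference is that the paper runs Petrunin's scheme directly between the full tangent cones $T_{c_i(1)}P(M)\cong P_2(T_{\gamma_i(1)}M)$ and $T_{c_i(0)}P(M)$ rather than between normal cones (this is stated explicitly at the start of Subsection \ref{subsection5.1.5}), so the normal projection $\pr_N$ never enters: the measure $\tau_s$ representing $\mu_s'(0)$ is computed in closed form as $\tau_s=\bigl(\exp_{\gamma_i(0)}^{-1}\circ\exp_{\gamma_i(1)}\circ R_s\bigr)_*\nu$ (unique because everything sits in a totally convex ball for large $Q$), the derivative at $s=0$ is read off exactly, and the $Q\to\infty$ limit is the convergence of a single composed linear map to $\Pi$, with no error accumulation to control. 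What your route buys is closer fidelity to Petrunin's general recipe (which is phrased on normal cones) and an explicit check that the $\R$-factor of the splitting is respected; what it costs is precisely the bookkeeping you flag as the main obstacle --- the interleaved projections and the uniform $O(Q^{-2})$ comparison of $P_i^M$ with $\Pi_i^M$ --- which is sound in outline (projection onto a factor of a metric product is $1$-Lipschitz, so per-step errors of size $O(Q^{-2})$ accumulate to $O(Q^{-1})$) but is work the paper simply avoids by its choice of cones. If you adopt the full-tangent-cone formulation, your argument collapses to the paper's and the acknowledged gap disappears.
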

\begin{proof}
Given $Q \in \Z^+$ and $0 \le i \le Q-1$, define
$\gamma_i : [0,1] \rightarrow M$ by 
$\gamma_i(u) = \gamma \left( \frac{i + u}{Q} \right)$ and
${c}_i : [0,1] \rightarrow P(M)$ by
${c}_i(u) = 
\delta_{\gamma_i(u)}$.
We define an approximate parallel transport $P_i : T_{{c}_i(1)} P(M)
\rightarrow T_{{c}_i(0)} P(M)$ as follows.

Given $s \in \R^+$ and a real vector space $V$, let $R_s : V \rightarrow V$
be multiplication by $s$.
Let $\nu$ be a compactly-supported element of $P(T_{\gamma_i(1)} M)$.
For small $\epsilon > 0$, there is a Wasserstein geodesic
$\sigma \: : \: {[0, \epsilon]} \rightarrow P(M)$, with $\sigma(0) = c_i(1)$
and $\sigma^\prime(0)$ corresponding to $\nu \in T_{c_i(1)}PM$, given
by $\sigma(s) = (\exp_{\gamma_i(1)} \circ R_s)_* \nu$. Given
$s \in (0, \epsilon]$, let $\mu_s : [0,1] \rightarrow P(M)$ be a 
minimizing geodesic with $\mu_s(0) = c_i(0) = 
\delta_{\gamma_i(0)}$ 
and $\mu_s(1) = \sigma(s)$. There is a compactly-supported measure 
$\tau_s \in P_2(T_{\gamma_i(0)}M) = T_{c_i(0)} P(M)$ 
so that for $v \in [0,1]$, we have
$\mu_s(v) =  (\exp_{\gamma_i(0)} \circ R_v)_* \tau_s$. If 
$Q$ is large and $\epsilon$ is small then all of the constructions take
place well inside a totally convex ball, so $\tau_s$ is unique and can
be written as $\tau_s = \left( 
\exp_{\gamma_i(0)}^{-1} \circ  \exp_{\gamma_i(1)} \circ R_s \right)_* \nu$.
Then $\lim_{s \rightarrow 0} \frac{1}{s} (\tau_s - \tau_0)$ exists and equals
$(d\exp_{\gamma_i(0)})^{-1}_* \nu$.
Thus $P_i =  (d\exp_{\gamma_i(0)})^{-1}_*$.

Now 
\begin{equation}
P_0 \circ P_1 \circ \ldots \circ P_{Q-1} =
\left( (d\exp_{\gamma_0(0)})^{-1} \circ (d\exp_{\gamma_1(0)})^{-1} 
\circ \ldots \circ (d\exp_{\gamma_{Q-1}(0)})^{-1} \right)_*.
\end{equation}
Taking $Q \rightarrow \infty$, this approaches $\Pi_*$.
\end{proof}

\subsection{Construction of parallel transport along a 
Wasserstein geodesic of absolutely continuous measures}
\label{subsection5.2}

Let $M$ be a compact connected boundaryless Riemannian manifold
with nonnegative sectional curvature.
Then $(P(M), W_2)$ has nonnegative Alexandrov curvature.

Let $c : [0,1] \rightarrow P^{ac}(M)$ be a geodesic segment that lies in the
interior of a minimizing geodesic.
Write $c^\prime(t) = V_{\phi(t)}$. Since $\phi(t)$ is defined up to a
constant, it will be convenient to normalize it by 
$\int_M \phi(t) \: d\mu_t = 0$.
We assume that
\begin{equation} \label{5.1}
\sup_{t \in [0,1]} \| \phi(t) \|_{C^2(M)} \: < \: \infty.
\end{equation}
In particular, this is satisfied if $c$ lies in $P^\infty(M)$.

Let $N_{c(t)} P(M)$ denote the
normal cone to $c$ at $c(t)$.
We want to construct a parallel transport map from
$N_{c(1)} P(M)$ to $N_{c(0)} P(M)$.

Given $Q \in \Z^+$ and $0 \le i \le Q-1$, define
${c}_i : [0,1] \rightarrow P(M)$ by
${c}_i(u) = c \left( \frac{i + u}{Q} \right)$.
Correspondingly, write ${\mu}_{i,u} = \mu_{\frac{i + u}{Q}}$.
We define an approximate parallel transport $P_i : N_{{c}_i(1)} P(M)
\rightarrow N_{{c}_i(0)} P(M)$, using Jacobi fields, as follows.

Let us write ${c}_i^\prime(u) = V_{\phi_i(u)}$, 
i.e. $\phi_i(u) = \frac{1}{Q} \phi \left( \frac{i + u}{Q} \right)$.
The curve ${c}_i$ is given by
${c}_i(u) = (F_{i,u})_* {c}_i(0)$, where
$F_{i,u}(x) = \exp_x (u \nabla_x \phi_i(0))$.
That is, for any $f \in C^\infty(M)$,
\begin{equation} \label{5.2}
\int_M f \: d{c}_i(u) = \int_M f(F_{i,u}(x)) \: d\mu_{i,0}(x).
\end{equation}

If $\sigma_i$ is a variation of $\phi_i(0)$, i.e. $\delta 
\phi_i(0) =\sigma_i$, then taking the variation of (\ref{5.2}) gives
\begin{align} \label{5.3}
\int_M f \: d \delta {c}_i(u) & = 
\int_M \langle \nabla f, d\exp_{u \nabla_x \phi_i(0)}
(u \nabla_x \sigma_i) 
\rangle_{F_{i,u}(x)} \: d\mu_{i,0}(x) \\
& = u \int_M \langle \nabla f, W_{\sigma_i}(u) 
\rangle \: d\mu_{i,u}. \notag
\end{align}
Here
\begin{equation} \label{5.4}
(W_{\sigma_i}(u))_y = d \exp_{u \nabla_x \phi_i(0)}
(\nabla_x \sigma_i), 
\end{equation}
with $y = F_{i,u}(x)$.
The corresponding tangent vector at ${c}_i(u)$ is 
represented by
$L_{\sigma_i}(u) = \Pi_{{c}_i(u)} W_{\sigma_i}(u)$, where
$\Pi_{{c}_i(u)}$ is orthogonal projection on
$\overline{\Image \nabla} \subset L^2(TM, d\mu_{i,u})$.
We can think of $J_{\sigma_i}(u) = u L_{\sigma_{i}}(u)$ as a Jacobi 
field along ${c}_i$.
If $v =  J_{\sigma_i}(1) = 
L_{\sigma_i}(1) = \Pi_{{c}_i(1)} W_{\sigma_i}(1)$ then
its approximate parallel transport along ${c}_i$ is represented by
$w = J_{\sigma_i}^\prime(0) = L_{\sigma_i}(0) = \nabla \sigma_i \in 
\overline{\Image \nabla} \subset L^2(TM, d\mu_{i,0})$.

Next, using (\ref{5.4}), for $f \in C^\infty(M)$ we have
\begin{align} \label{5.5}
\frac{d}{du} 
\int_M \langle V_f, L_{\sigma_i} \rangle \: d\mu_{i,u} = &
\frac{d}{du}  
\int_M \langle V_f, W_{\sigma_i} \rangle \: d\mu_{i,u} = 
\frac{d}{du} 
\int_M \langle \nabla f, d\exp_{u \nabla_x \phi_i(0)}
(\nabla_x \sigma_i) 
\rangle_{F_{i,u}(x)} \: d\mu_{i,0}(x) \\
= & 
\int_M \Hess_{F_{i,u}(x)}(f) \left(
d\exp_{u \nabla_x \phi_i(0)}
(\nabla_x \phi_i(0)),
d \exp_{u \nabla_x \phi_i(0)}
(\nabla_x \sigma_i)
\right) \: d{\mu}_{i,0}(x) + \notag \\
& \int_M \langle \nabla f, D_{\partial_u} 
d \exp_{u \nabla_x \phi_i(0)}
(\nabla_x \sigma_i) 
\rangle_{F_{i,u}(x)} \: d{\mu}_{i,0}(x) \notag \\
= & 
\int_M \Hess(f) \left( \nabla \phi_i(u),
W_{\sigma_i}(u)
\right) \: d{\mu}_{i,u} + \notag \\
& \int_M \langle \nabla f, D_{\partial_u} W_{\sigma_i}(u) 
\rangle \: d{\mu}_{i,u}. \notag
\end{align}
Here $\partial_{u}$ is the vector at $F_{i,u}(x)$ given by
\begin{equation} \label{5.6}
\partial_u = \frac{d}{du} F_{i,u}(x) =
d\exp_{u \nabla_x \phi_i(0)} (\nabla_x \phi_i(0)).
\end{equation}
If instead $f \in C^\infty([0,1]; C^\infty(M))$ then
\begin{align} \label{5.7}
\frac{d}{du} \int_M \langle V_f, L_{\sigma_i} \rangle  \: d\mu_{i,u} \: = &
\int_M \left\langle \nabla
\frac{\partial f}{\partial u}, L_{\sigma_i} \right\rangle  \: d\mu_{i,u} \: 
+  \\
& \int_M \Hess(f) \left( \nabla \phi_i(u),
W_{\sigma_i}(u)
\right) \: d{\mu}_{i,u} + \notag \\
& \int_M \langle \nabla f, D_{\partial_u} W_{\sigma_i}(u) 
\rangle \: d{\mu}_{i,u}. \notag
\end{align}

We will need to estimate $\int_M | W_{\sigma_i}(u) - L_{\sigma_i}(u) |^2
\: d{\mu}_{i,u}$. 

\begin{lemma} \label{lemest}
For large $Q$, there is an estimate
\begin{align} \label{5.11}
& \int_M | W_{\sigma_i}(u) - L_{\sigma_i}(u) |^2
\: d{\mu}_{i,u}  \le \\
& \const 
\|\Hess(\phi_i(\cdot)) \|_{L^\infty([0,1] \times M)}^2
\| L_{\sigma_i}(0) \|_{L^2(TM, d{\mu}_{i,0})}^2.\notag
\end{align}
Here, and hereafter, $\const$ denotes a constant that can depend on
the fixed Riemannian manifold $(M, g)$.
\end{lemma}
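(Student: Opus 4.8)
The plan is to estimate the left-hand side of (\ref{5.11}) without differentiating the projection $\Pi_{{c}_i(u)}$, by exploiting that $L_{\sigma_i}(u) = \Pi_{{c}_i(u)} W_{\sigma_i}(u)$ is the \emph{nearest} element of $\overline{\Image \nabla} \subset L^2(TM, d\mu_{i,u})$ to $W_{\sigma_i}(u)$. Consequently $\int_M | W_{\sigma_i}(u) - L_{\sigma_i}(u) |^2 \: d\mu_{i,u} \le \int_M | W_{\sigma_i}(u) - \nabla g |^2 \: d\mu_{i,u}$ for every $g \in C^\infty(M)$, so it suffices to exhibit one good test function. For large $Q$ the map $F_{i,u}$ is a diffeomorphism, being a $C^1$-small perturbation of the identity since $\nabla_x \phi_i(0) = \frac{1}{Q} \nabla_x \phi(i/Q) = O(1/Q)$ uniformly by (\ref{5.1}); I would therefore take $g = g_u \defeq \sigma_i \circ F_{i,u}^{-1}$. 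This is the natural choice because at $u = 0$ we have $\nabla g_0 = \nabla \sigma_i = W_{\sigma_i}(0)$, so the error vanishes at the left endpoint and, heuristically, $g_u$ undoes the flow while keeping $W_{\sigma_i}$ as close to a gradient as the geometry permits.

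First I would change variables by $F_{i,u}$ so that everything is integrated over $(M, d\mu_{i,0})$: writing $y = F_{i,u}(x)$, one has $\int_M | W_{\sigma_i}(u) - \nabla g_u |^2 \: d\mu_{i,u} = \int_M | W_{\sigma_i}(u)_y - (\nabla g_u)_y |^2 \: d\mu_{i,0}(x)$. By (\ref{5.4}) the first vector is $d \exp_{u \nabla_x \phi_i(0)}(\nabla_x \sigma_i)$, while from the identity $g_u \circ F_{i,u} = \sigma_i$ the second is $(dF_{i,u})^{-*} \nabla_x \sigma_i$, where $dF_{i,u} : T_x M \to T_y M$ and $(\cdot)^{-*}$ is the inverse of its metric adjoint. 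The whole estimate thus reduces to comparing these two vectors in $T_y M$.

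The core is a pointwise Jacobi-field computation along the short geodesic $r \mapsto \exp_x(r \nabla_x \phi_i(0))$, whose velocity has norm $|\nabla_x \phi_i(0)| = O(1/Q)$. Indeed $u \, W_{\sigma_i}(u)_y$ is the value at $r = u$ of the Jacobi field with initial value $0$ and initial covariant derivative $\nabla_x \sigma_i$, whereas $dF_{i,u}(w)$ is the value at $r = u$ of the Jacobi field with initial value $w$ and initial covariant derivative $\nabla_w \nabla \phi_i(0)$ (the covariant derivative of the vector field $\nabla \phi_i(0)$). Expanding both Jacobi fields in $u$ and using that every curvature contribution carries a factor $|\nabla_x \phi_i(0)|^2 = O(1/Q^2)$, I would obtain
\[
W_{\sigma_i}(u)_y - (\nabla g_u)_y = u \, \tau_u \big( \nabla_{\nabla_x \sigma_i} \nabla \phi_i(0) \big) + O(Q^{-2}) \, | \nabla_x \sigma_i |,
\]
with $\tau_u$ parallel transport; the leading obstruction to $W_{\sigma_i}(u)$ being a gradient is exactly the Hessian term $\nabla_{\nabla_x \sigma_i} \nabla \phi_i(0)$. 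This yields the pointwise bound $| W_{\sigma_i}(u)_y - (\nabla g_u)_y | \le \const \big( u \, \| \Hess \phi_i(0) \| + Q^{-2} \big) \, | \nabla_x \sigma_i |$, uniformly in $x$ and $u \in [0,1]$ by compactness of $M$ and (\ref{5.1}).

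Finally I would square, integrate against $d\mu_{i,0}$, and use $\int_M | \nabla \sigma_i |^2 \: d\mu_{i,0} = \| L_{\sigma_i}(0) \|_{L^2(TM, d\mu_{i,0})}^2$ together with $\| \Hess \phi_i(0) \| \le \| \Hess(\phi_i(\cdot)) \|_{L^\infty([0,1] \times M)}$. In the resulting bound the main term is of order $\| \Hess(\phi_i(\cdot)) \|_{L^\infty}^2 = O(Q^{-2})$, while the curvature cross term is $O(Q^{-3})$ and the pure curvature term is $O(Q^{-4})$; for large $Q$ these are dominated by the main term and absorbed into $\const$, giving (\ref{5.11}). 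The main obstacle is precisely this bookkeeping: one must track the two Jacobi-field expansions carefully enough to see that the genuine obstruction is the Hessian, and that the curvature corrections — present because nonnegative sectional curvature does not force $W_{\sigma_i}(u)$ to be a gradient on the nose — enter the squared, integrated quantity only at higher order in $1/Q$.
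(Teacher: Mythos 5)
Your proposal is correct and follows essentially the same route as the paper: both bound the distance to the projection by the distance to the explicit competitor $\nabla(\sigma_i \circ F_{i,u}^{-1})$, change variables via $F_{i,u}$ back to $d\mu_{i,0}$, and then estimate the resulting comparison of $d\exp_{u\nabla_x\phi_i(0)}$ with $dF_{i,u}$ in terms of $\Hess(\phi_i)$. Your Jacobi-field expansion just makes explicit the step the paper states tersely (that the operator $T_{i,t,x}$ deviates from the identity by a term controlled by the Hessian), so there is nothing further to add.
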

\begin{proof}
Since $\Pi_{c_i(u)}$ is projection onto
$\overline{\Image(\nabla)} \subset L^2(TM, d\mu_{i,u})$, and 
$\nabla (\sigma_i \circ F_{i,u}^{-1}) \in 
\Image(\nabla)$, we have
\begin{align} \label{5.8}
\int_M | W_{\sigma_i}(u) - L_{\sigma_i}(u) |^2
\: d{\mu}_{i,u}  \le &
\int_M | W_{\sigma_i}(u) - \nabla (\sigma_i \circ F_{i,u}^{-1}) |_g^2
\: d{\mu}_{i,u} \\
= & \int_M | (dF_{i,u})^{-1}_*
 W_{\sigma_i}(u) - \nabla \sigma_i |_{F_{i,u}^* g}^2
\: d{\mu}_{i,0}. \notag
\end{align}
(Compare with \cite[Proposition 4.3]{Ambrosio-Gigli (2008)}.)
Defining $T_{i,t,x} \: : \: T_xM \rightarrow T_xM$ by 
\begin{equation} \label{5.9}
T_{i,t,x}(z) = (dF_{i,u})_*^{-1} \left( d \exp_{u \nabla_x \phi_i(0)}
(z) \right),
\end{equation}
we obtain
\begin{align} \label{5.10}
& \int_M | W_{\sigma_i}(u) - L_{\sigma_i}(u) |^2
\: d{\mu}_{i,u} \le \\
& \left(
\sup_{x \in M} \| dF_{i,u}^* dF_{i,u}(x) \| \cdot
\| T_{i,u,x} - I \|^2 \right) 
\| L_{\sigma_i}(0) \|_{L^2(TM, d{\mu}_{i,0})}^2. \notag
\end{align}
Since $\sup_{t \in [0,1]} \| \nabla \phi(t) \|_{C^0(M)} < \infty$,
if $Q$ is large then $\| \nabla \phi_i(0) \|_{C^0(M)}$
is much smaller than the injectivity radius of $M$.  In particular,
the curve $\{F_{i,u}(x)\}_{u \in [0,1]}$ lies well within a normal ball
around $x$. Now $T_{i,t,x}$ can be estimated in terms of $\Hess(\phi_i)$.
In general, if a function $h$ on a complete Riemannian manifold satisfies
$\Hess(h) = 0$ then the manifold isometrically splits off an $\R$-factor
and the optimal transport path generated by $\nabla h$ is translation
along the $\R$-factor.  In such a case, the analog of $T_{i,t,x}$ is
the identity map.  If $\Hess(h) \neq 0$ then the divergence of
a short optimal transport path from being a 
translation can be estimated in terms
of $\Hess(h)$. Putting in the estimates gives (\ref{5.11}).
\end{proof}

Using Lemma \ref{lemest}, we have
\begin{align} \label{5.12}
& \left| \int_M \Hess(f) \left( \nabla \phi_i(u),
W_{\sigma_i}(u)
\right) \: d{\mu}_{i,u} - \int_M \Hess(f) \left( \nabla \phi_i(u),
L_{\sigma_i}(u)
\right) \: d{\mu}_{i,u} \right| \le \\
& \const \| \Hess(f) \|_{C^0(M)}  
\|\Hess(\phi_i(\cdot)) \|_{L^\infty([0,1] \times M)}
\| \nabla \phi_i(u) \|_{L^2(TM, d{\mu}_{i,0})}
\| L_{\sigma_i}(0) \|_{L^2(TM, d{\mu}_{i,0})}. \notag
\end{align}

Next, given $x \in M$, consider the geodesic
\begin{equation} \label{5.13}
\gamma_{i,x}(u) = F_{i,u}(x).
\end{equation}
Put
\begin{equation} \label{5.14}
j_{\sigma_i,x}(u) = u (W_{\sigma_i}(u))_{\gamma_{i,x}(u)} \in 
T_{\gamma_{i,x}(u)}M.
\end{equation}
Then $j_{\sigma_i,x}$ is a Jacobi field along $\gamma_{i,x}$, with
$j_{\sigma_i, x}(0) = 0$ and $j_{\sigma_i, x}^\prime(0) = 
\nabla_x \sigma_i$. Jacobi field estimates give
\begin{equation} \label{5.15}
\| D_{\partial_u} W_{\sigma_i}(u) \|_{L^2(TM, d\mu_{i,u})} \le
\const \| \nabla \sigma_i \|_{L^2(TM, d\mu_{i,u})} 
\| \nabla \phi_i(\cdot) \|^2_{L^\infty([0,1] \times M)},
\end{equation}
again for $Q$ large.

\begin{lemma} \label{lemma5.16}
Define $A_i : \left( \overline{\Image(\nabla)} \subset L^2(TM, d\mu_{i,0}) 
\right) 
\rightarrow  \left( \overline{\Image(\nabla)} \subset L^2(TM, d\mu_{i,1})
\right)$ by
\begin{equation} \label{5.17}
A_i (\nabla \sigma_i) = L_{\sigma_i}(1).
\end{equation}
Then for large $Q$, the map $A_i$ is invertible for all
$i \in \{0, \ldots, Q-1\}$.
\end{lemma}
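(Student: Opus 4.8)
The plan is to prove that $A_i$ is a linear isomorphism by showing that both $A_i$ and its adjoint $A_i^*$ are bounded below by $\frac12$ once $Q$ is large; a bounded operator between Hilbert spaces that is bounded below on both sides is automatically bijective with bounded inverse, uniformly in $i$. The starting observation is that $A_i$ factors through the pointwise transport. Let $U_i \: : \: L^2(TM, d\mu_{i,0}) \to L^2(TM, d\mu_{i,1})$ be the map $(U_i V)_{F_{i,1}(x)} = d\exp_{\nabla_x \phi_i(0)}(V_x)$; comparing with (\ref{5.4}) we have $U_i(\nabla \sigma_i) = W_{\sigma_i}(1)$, and since $L_{\sigma_i}(1) = \Pi_{c_i(1)} W_{\sigma_i}(1)$ by definition, $A_i(\nabla \sigma_i) = \Pi_{c_i(1)} U_i(\nabla \sigma_i)$.

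First I would show that $U_i$ is a near-isometry of the full $L^2$ spaces. Because $\mu_{i,1} = (F_{i,1})_* \mu_{i,0}$ and parallel transport along the short geodesics $u \mapsto F_{i,u}(x)$ is a pointwise isometry, the map $T_i \: : \: L^2(TM, d\mu_{i,0}) \to L^2(TM, d\mu_{i,1})$, $(T_i V)_{F_{i,1}(x)} = P_x(V_x)$ (where $P_x \: : \: T_xM \to T_{F_{i,1}(x)}M$ is parallel transport) is a unitary isomorphism. By (\ref{5.1}) one has $|\nabla_x \phi_i(0)| = O(Q^{-1})$ uniformly, so the standard Jacobi-field comparison of $d\exp$ with parallel transport gives $U_i = T_i(I + \mathcal E_i)$ with $\|\mathcal E_i\|_{\mathrm{op}} = O(Q^{-2})$. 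The lower bound for $A_i$ is then immediate from Lemma \ref{lemest}: the projection loss is exactly $(I - \Pi_{c_i(1)}) U_i(\nabla \sigma_i) = W_{\sigma_i}(1) - L_{\sigma_i}(1)$, which (\ref{5.11}) bounds by $\const \|\Hess(\phi_i(\cdot))\|_{L^\infty} \|\nabla \sigma_i\|_{L^2(d\mu_{i,0})} = O(Q^{-1}) \|\nabla \sigma_i\|$, since $\|\Hess(\phi_i(\cdot))\|_{L^\infty} = Q^{-1} \|\Hess(\phi(\cdot))\|_{L^\infty}$. Hence $\|A_i(\nabla \sigma_i)\| \ge \|U_i(\nabla \sigma_i)\| - \|W_{\sigma_i}(1) - L_{\sigma_i}(1)\| \ge (1 - O(Q^{-1})) \|\nabla \sigma_i\|$, and similarly $\|A_i(\nabla \sigma_i)\| \le (1 + O(Q^{-2})) \|\nabla \sigma_i\|$.

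The remaining and most delicate point is the lower bound for $A_i^*$. A direct pairing shows $A_i^* \eta = \Pi_{c_i(0)} U_i^* \eta$ for $\eta \in \overline{\Image \nabla} \subset L^2(TM, d\mu_{i,1})$, with $U_i^* = (I + \mathcal E_i^*) T_i^{-1}$, so I must show that the projection loss $(I - \Pi_{c_i(0)}) U_i^* \eta$ is $O(Q^{-1}) \|\eta\|$; equivalently, that $T_i^{-1}$ carries gradients to near-gradients. For $\eta = \nabla \psi$ this is the heart of the matter: differentiating $F_{i,1}(x) = \exp_x(\nabla_x \phi_i(0))$ in the base point produces a Jacobi field with $J(0) = z$ and $J'(0) = \Hess \phi_i(0)(z)$, whence $(dF_{i,1})_x = P_x (I + \Hess \phi_i(0) + O(Q^{-2}))$ and $(dF_{i,1})_x^* = (I + \Hess \phi_i(0) + O(Q^{-2})) P_x^{-1}$. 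Since $\nabla(\psi \circ F_{i,1})_x = (dF_{i,1})_x^* (\nabla \psi)_{F_{i,1}(x)}$ is a genuine gradient in $\overline{\Image \nabla} \subset L^2(TM, d\mu_{i,0})$, inverting the symmetric factor gives $T_i^{-1} \nabla \psi = \nabla(\psi \circ F_{i,1}) - \Hess \phi_i(0)\, \nabla(\psi \circ F_{i,1}) + O(Q^{-2})$. The leading term is killed by $I - \Pi_{c_i(0)}$, while the Hessian correction has norm $O(Q^{-1}) \|\eta\|$; together with the $O(Q^{-2})$ error from $\mathcal E_i^*$ this yields $\|A_i^* \eta\| \ge (1 - O(Q^{-1})) \|\eta\|$, by density extending from $\eta = \nabla \psi$ to all of $\overline{\Image \nabla}$.

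For $Q$ large enough, both $A_i$ and $A_i^*$ are therefore bounded below by $\frac12$ for every $i \in \{0, \dots, Q-1\}$: equivalently $A_i^* A_i$ and $A_i A_i^*$ have spectrum in $[\frac14, 2]$, so $A_i$ is injective with closed range and has dense range, hence is invertible. I expect the main obstacle to be precisely the adjoint estimate of the previous paragraph; the bound on $A_i$ itself is a direct consequence of Lemma \ref{lemest} and the Jacobi-field size of $d\exp$, but controlling $A_i^*$ requires the separate geometric input that parallel transport back along the optimal-transport map $F_{i,1}$ preserves the subspace $\overline{\Image \nabla}$ modulo an $O(Q^{-1})$ Hessian correction.
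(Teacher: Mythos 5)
Your argument is correct, and it is organized differently from the paper's. The paper proves surjectivity by exhibiting an explicit approximate right inverse $B_i(\nabla f) = \nabla(f \circ F_{i,1})$, showing $\| A_i B_i - I \|$ is small so that $B_i (A_i B_i)^{-1}$ is a genuine right inverse, and then proves injectivity separately by pairing a putative unit-length element of $\Ker(A_i)$ against $\nabla (\sigma \circ F_{i,1})$. You instead prove two-sided coercivity: a lower bound for $A_i$ (injectivity and closed range) and a lower bound for $A_i^*$ (dense range). The two routes rest on the same two geometric inputs --- that the pointwise map $U_i = d\exp_{\nabla \phi_i(0)}$ is an $O(Q^{-2})$ perturbation of the unitary $T_i$ induced by parallel transport and pushforward of measure, and that pullback by $F_{i,1}$ approximately intertwines the gradient subspaces with error controlled by $\Hess(\phi_i) = O(Q^{-1})$ --- and indeed your adjoint estimate is essentially the paper's bound (\ref{5.20}) in disguise, since $\| A_i B_i - I \|$ small forces $A_i^*$ to be bounded below, while your lower bound on $A_i$ is the content of (\ref{5.21}). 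The paper's choice of $B_i$ is the slicker packaging: $B_i$ manifestly lands in $\overline{\Image \nabla}$, so one never has to verify (as you must, via the identity $(dF_{i,1})^* \nabla \psi = \nabla(\psi \circ F_{i,1})$ and the symmetric Hessian correction) that $U_i^*$ approximately preserves the gradient subspace. What your version buys is an explicit uniform quantitative statement, $\| A_i \|, \| A_i^{-1} \| \le 1 + O(Q^{-1})$ for all $i$ simultaneously, which is the kind of control one wants when composing $Q$ of these maps; the paper obtains that control instead from Petrunin's estimate (\ref{5.24}).
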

\begin{proof}
Define  $B_i : \left( \overline{\Image(\nabla)} \subset L^2(TM, d\mu_{i,1}) 
\right) 
\rightarrow  \left( \overline{\Image(\nabla)} \subset L^2(TM, d\mu_{i,0}) 
\right)$ 
by
\begin{equation} \label{5.18}
B_i(\nabla f) = \nabla (f \circ F_{i,1}).
\end{equation}
Then whenever $\nabla f \in L^2(TM, d\mu_{i,1})$, we have
\begin{equation} \label{5.19}
(A_i B_i)(\nabla f) = A_i(\nabla(f\circ F_{i,1})) = L_{f \circ F_{i,1}}(1),
\end{equation}
so whenever $\nabla f^\prime \in L^2(TM, d\mu_{i,1})$, for large $Q$ we have
\begin{align} \label{5.20}
& 
\langle \nabla f^\prime, (A_i B_i - I)(\nabla f) \rangle_{L^2(TM, d\mu_{i,1})} 
= \\
& \langle \nabla f^\prime, W_{f \circ F_{i,1}}(1) - \nabla f 
\rangle_{L^2(TM, d\mu_{i,1})} \le \notag \\ 
& \const 
\|\Hess(\phi_i(\cdot)) \|_{L^\infty([0,1] \times M)}
\| \nabla f^\prime \|_{ L^2(TM, d\mu_{i,1})} 
\| \nabla f \|_{ L^2(TM, d\mu_{i,1})}. \notag 
\end{align}
Hence $\| A_i B_i - I \| = o(Q)$, so for large $Q$ the map
$A_i B_i$ is invertible and a
right inverse for $A_i$ is given by $B_i (A_i B_i)^{-1}$. This implies that
$A_i$ is surjective.

Now suppose that $\nabla \sigma \in \Ker(A_i)$ is nonzero,
with $\sigma \in H^1(M, d\mu_{i,0})$.  After normalizing,
we may assume that $\nabla \sigma$ has unit length. Then
\begin{align} \label{5.21}
0 = & \langle \nabla (\sigma \circ F_{i,1}), A_i(\nabla \sigma) 
\rangle_{L^2(TM, d\mu_{i,1})} =
\langle \nabla (\sigma \circ F_{i,1}), L_{\sigma}(1) 
\rangle_{L^2(TM, d\mu_{i,1})} \\
= & \langle \nabla (\sigma \circ F_{i,1}), W_{\sigma}(1) 
\rangle_{L^2(TM, d\mu_{i,1})} =
\langle \nabla \sigma, (dF_{i,1})^{-1} W_{\sigma}(1) 
\rangle_{L^2(TM, d\mu_{i,0})} \notag \\
= &
1 - \langle \nabla \sigma, \nabla \sigma - (dF_{i,1})^{-1} W_{\sigma}(1) 
\rangle_{L^2(TM, d\mu_{i,0})}
\ge 1 - \const \|\Hess(\phi_i(\cdot)) \|_{L^\infty([0,1] \times M)}, \notag
\end{align}
for large $Q$. If $Q$ is sufficiently 
large then this is a contradiction, so $A_i$ is injective.
\end{proof}

Fix ${\mathcal V}_1 \in N_{c(1)} P(M)$. If ${\mathcal V}_1 \neq 0$ then
after normalizing, 
we may assume that it has unit length.
For $Q \in  \Z^+$ large and $t \in [0,1]$,
define ${\mathcal V}_Q(t) \in N_{c(t)} P(M)$ as follows.
First, using Lemma \ref{lemma5.16},
find $\sigma_{Q-1}$ so that ${\mathcal V}_1 = L_{\sigma_{Q-1}}(1)$.
For $t \in \left[ \frac{Q-1}{Q}, 1 \right]$, put
\begin{equation} \label{5.22}
{\mathcal V}_Q(t) = L_{\sigma_{Q-1}}(Qt - (Q-1)).
\end{equation} 
Doing backward recursion, starting with $i = Q-2$, using Lemma \ref{lemma5.16}
we find $\sigma_i$ so that $L_{\sigma_i}(1) = L_{\sigma_{i+1}}(0)
= \nabla \sigma_{i+1}$. For
$t \in \left[ \frac{i}{Q}, \frac{i+1}{Q} \right]$, put
\begin{equation} \label{5.23}
{\mathcal V}_Q(t) =  L_{\sigma_i}(Qt-i).
\end{equation}
Decrease $i$ by one and repeat. The last step is when $i = 0$.

From the argument in  \cite[Lemma 1.8]{Petrunin (1998)},
\begin{equation} \label{5.24}
\lim_{Q \rightarrow \infty}
\sup_{t \in [0,1]} | \|{\mathcal V}_Q(t)\| - 1 | = 0.
\end{equation} 
We note that the proof of \cite[Lemma 1.8]{Petrunin (1998)} only uses
results about geodesics in Alexandrov spaces, it so applies to our
infinite-dimensional setting.  It also uses the assumption that
$c$ lies in the interior of a minimizing geodesic.
After passing
to a subsequence, we can assume that
\begin{equation} \label{5.25}
\lim_{Q \rightarrow \infty}  
\left( {\mathcal V}_Q,  {\mathcal V}_Q(0),
{\mathcal V}_Q(1) \right) =   
\left( {\mathcal V}_\infty,  {\mathcal V}_{\infty,0},
{\mathcal V}_{\infty,1} \right)
\end{equation}
in the weak topology on  
$L^2([0,1]; L^2(TM, d\mu_t)) \oplus L^2(TM, d\mu_0) \oplus L^2(TM, d\mu_1)$.
Note that ${\mathcal V}_{\infty,1} = {\mathcal V}_{1}$.

From (\ref{5.7}), (\ref{5.12}) and (\ref{5.15}), for a fixed 
$f \in C^\infty([0,1]; C^\infty(M))$, on each interval
$\left[ \frac{i}{Q}, \frac{i+1}{Q} \right]$ we have
\begin{align} \label{5.27}
\frac{d}{dt} \int_M \left\langle V_f, {\mathcal V}_Q \right\rangle 
\: d\mu_t = & \int_M
 \left\langle \nabla \frac{\partial f}{\partial t}, 
{\mathcal V}_Q(t) \right\rangle \: d\mu_t \: + \\ 
& \int_M \Hess(f)(\nabla \phi(t), {\mathcal V}_Q(t)) \: d\mu_t + o(Q). \notag
\end{align}
It follows that
$\left( {\mathcal V}_\infty, {\mathcal V}_{\infty, 0}, 
{\mathcal V}_{\infty,1} \right)$ is
a weak solution of the parallel transport equation.
As the limiting vector fields are gradient vector fields, we can write
$\left( {\mathcal V}_\infty, {\mathcal V}_{\infty, 0}, 
{\mathcal V}_{\infty,1} \right) = 
\left( \nabla \eta_{\infty}, \nabla \eta_{\infty, 0}, 
\nabla \eta_{\infty,1} \right)$ for some
$\left(\eta_{\infty}, \eta_{\infty, 0}, 
\eta_{\infty,1} \right) \in 
L^2([0,1]; H^1(M, d\mu_t)) \oplus H^1(M, d\mu_0) \oplus H^1(M, d\mu_1))$.

Suppose that $c$ is a smooth geodesic in $P^\infty(M)$,
that ${\mathcal V}_1$ (and hence
$\eta_{\infty, 1}$) is smooth and that
there is a smooth solution $\eta$ to the parallel transport
equation (\ref{4.6}) with $\nabla \eta(1) = \nabla \eta_{\infty, 1}$. 
By Lemma \ref{lemma4.7}, $\| \nabla \eta(t)\|$ is independent of $t$.
By Lemma \ref{lemma4.17},
$\left( \nabla  \eta_{\infty}, \nabla \eta_{\infty, 0}, 
\nabla \eta_{\infty,1} \right) =
\left( \nabla \eta, \nabla \eta(0), \nabla \eta(1) \right)$.
We claim that
\begin{equation} \label{5.28}
\lim_{Q \rightarrow \infty}  (\nabla \eta_Q,  \nabla \eta_Q(0),
\nabla \eta_Q(1)) =   
\left( \nabla \eta, \nabla \eta(0), \nabla \eta_{\infty,1} \right)
\end{equation}
in the norm topology on $L^2([0,1]; L^2(TM, d\mu_t)) \oplus 
L^2(TM, d\mu_0) \oplus L^2(TM, d\mu_1)$.
This is because of the general fact that if
$\{x_i\}_{i=1}^\infty$ is a sequence in a Hilbert space $H$
with $\lim_{i \rightarrow \infty} |x_i| = 1$, and
there is some unit vector $x_\infty \in H$ so that every weakly convergent
subsequence of $\{x_i\}_{i=1}^\infty$ has weak limit $x_\infty$,
then $\lim_{i \rightarrow \infty} x_i = x_\infty$ in the norm
topology.

In particular, 
\begin{equation} \label{5.29}
\lim_{Q \rightarrow \infty} \nabla \eta_Q(0) = \nabla \eta(0)
\end{equation}
in the norm topology on $L^2(TM, d\mu_0)$.

This proves Theorem \ref{theorem1.3}.

\begin{remark} \label{added}
The construction of parallel transport in
\cite[Section 5]{Ambrosio-Gigli (2008)} and \cite[Section 4]{Gigli (2012)}
is also by taking the limit of an iterative
procedure.  The underlying logic in 
\cite{Ambrosio-Gigli (2008),Gigli (2012)} is 
different than what we use, which results in a different algorithm.
The iterative 
construction in \cite{Ambrosio-Gigli (2008),Gigli (2012)}
amounts to going forward along the curve $c$ applying certain
maps ${\mathcal P}_i$, instead of going backward along $c$ using the
inverses of the $A_i$'s as we do.  In the case of $\R^n$, the map
${\mathcal P}_i$ is the same as 
$A_i$, but this is not the case in general. The map
${\mathcal P}_i$ is nonexpanding, which helps the construction in
\cite{Ambrosio-Gigli (2008),Gigli (2012)}.
In contrast, $A_i^{-1}$ is not nonexpanding. In order to
control its products, we use the result
(\ref{5.24}) from \cite{Petrunin (1998)}.
\end{remark}

\end{document}